\theoremstyle{plain}
\newtheorem{theorem}{Theorem}
\newtheorem{lemma}{Lemma}[section]
\newtheorem{remark}{Remark}
\newtheorem{corollary}{Corollary}
\newtheorem{definition}{Definition}
\author{E.A.~Fominykh}
\author{V.G.~Turaev}
\author{A.Yu.~Vesnin}
\address{Chelyabinsk State University and Krasovskii Institute of Mathematics and Mechanics}
\email{fominykh@csu.ru}
\address{Indiana University and Chelyabinsk State University}
\email{vturaev@yahoo.com}
\address{Sobolev Institute of Mathematics and Chelyabinsk State University}
\email{vesnin@math.nsc.ru}
\thanks{This work was supported by the Laboratory of Quantum Topology, Chelyabinsk State University (the Russian government grant no. 14.Z50.31.0020). A.Yu.~Vesnin and E.A.~Fominykh were supported in part by the Russian Foundation for Basic Research (project no. 16-01-00609), and by the Ministry of Education and Science of the Russia (the state task number 1.1260.2014/K).}
\title{Complexity of virtual 3-manifolds}
\begin{document}

\begin{abstract}
 Virtual $3$-manifolds were introduced by S.V. Matveev in 2009 as   natural generalizations of the classical $3$-manifolds. In this paper, we introduce a notion of complexity of a virtual $3$-manifold. We investigate the values of the complexity for virtual 3-manifolds presented by special polyhedra with one or two $2$-components. On the basis of these results, we establish the exact values of the complexity for a wide class of hyperbolic $3$-manifolds with totally geodesic boundary.

\textbf{Key words:} virtual manifolds, $3$-manifolds, hyperbolic manifolds, complexity.
\end{abstract}

\maketitle

\section{Introduction}

  S.V. Matveev~\cite{Mat2009} introduced a notion of a virtual $3$-manifold generalizing the classical
  notion of a $3$-manifold. A virtual manifold is an equivalence class of so-called special polyhedra. Each virtual manifold determines a $3$-manifold with nonempty boundary and, possibly, $RP^2$-singularities. Many properties and invariants of  $3$-manifolds, such as Turaev--Viro invariants~\cite{TV},   extend  to   virtual manifolds.

 The complexity of a manifold is an important invariant in the theory of $3$-manifolds, see~\cite{MatBook}. The problem of calculating of the complexity   is very difficult. The exact values of the
complexity are presently known only for a finite number of tabulated manifolds~\cite{Mat05, FrigMarPet04} and for several infinite families of manifolds with boundary~\cite{FrigMarPet03, VesFom11, VesFom12, Stekl2015},   closed manifolds~\cite{JacoRubTil09, JacoRubTil11} and   manifolds with cusps~\cite{Ves-Tar-Fom2014-1,Ves-Tar-Fom2014-2, FGGTV}. Estimates of the complexity for some infinite families of manifolds are obtained in \cite{MPV, PV, CMV, Fominykh11, FomWiest13,VF2014}.

 In this paper we introduce  the complexity of a virtual $3$-manifold. We investigate the   values of the complexity for virtual manifolds presented by special polyhedra with one or two $2$-components. On the basis of these results we establish the exact values of the
 complexity for a wide class of hyperbolic $3$-manifolds with totally geodesic boundary.

 The paper is organized as follows. In Section~\ref{section2}, we present  basic facts of the theory of simple and special polyhedra, define a virtual  manifold and discuss invariants of virtual manifolds. In Section~\ref{section3}, we introduce the notion of complexity of a virtual manifold, investigate some of its properties and establish two-sided estimates for the complexity. We also state Theorem~\ref{thm:main} and establish Lemma~\ref{lemma:complexity_properties} devoted to a computation of the complexity of virtual manifolds presented by special polyhedra with two $2$-components. Section~\ref{section4} is completely devoted to the proof of Theorem~\ref{thm:main}. For the reader's convenience we include definitions of the $\varepsilon$-invariant and Turaev--Viro invariants, which play a key role in the proof. In Section~\ref{section5}, we discuss a relationship between our results about the complexity of virtual manifolds and the complexity  of classical $3$-manifolds. We also prove Theorem~\ref{theorem9}, giving exact values of the complexity for hyperbolic $3$-manifolds with totally geodesic boundary that have special spines with two $2$-components. In Section~\ref{section6}, we give examples of two infinite families of $3$-manifolds satisfying the conditions of Theorem~\ref{theorem9}.

\section{Virtual 3-manifolds}
 \label{section2}

 Recall some basic facts from the theory of simple and special polyhedra developed by S.V. Matveev (see \cite[Ch.~1]{MatBook}). A compact two-dimensional polyhedron $P$ is said to be \emph{simple} if the link of each point $x \in P$ is homeomorphic to either a circle (such a point $x$ is called \emph{nonsingular}), or a graph consisting of two vertices and three  joining them edges (such $x$ is called a \emph{triple point}), or the complete graph $K_4$ with four vertices (such a point $x$ is called a \emph{true vertex}). The connected components of the set of all nonsingular points are called $2$-\emph{components} of $P$, while the connected components of the set of all triple points are called \emph{triple lines} of $P$. The set of singular points of   $P$ (that is, the union of  triple lines and true vertices) is called the \emph{singular graph} of $P$. Every simple polyhedron is naturally stratified. In this stratification each 2-dimensional stratum   (a $2$-component) is a connected component of the set of nonsingular points, strata of dimension $1$ are the  (open or closed) triple lines, and  strata of dimension $0$ are the true vertices.

 A simple polyhedron is  \emph{special} if each its 1-dimensional stratum is an open $1$-cell, and each its $2$-component is an open $2$-cell. A singular graph of a special polyhedron has at least one true vertex and   is a $4$-regular graph. Therefore, it is natural to call the triple lines of a special polyhedron \emph{edges}.

 For a special polyhedron $P$  with at least two true vertices,   a \emph{$T$-move} on~$P$   removes a proper subpolyhedron of $ P$  shown in Fig. \ref{fig1}  on the left  and glues instead a proper subpolyhedron  shown in Fig. \ref{fig1}  on the right. Note that $T$ increases by 1 both the number of true vertices  and the number of $2$-components of $P$, while the inverse move $T^{-1}$ decreases both these numbers by 1. The moves $T$ and $T^{-1}$ do not change the Euler characteristic of the special polyhedron.

\begin{figure}[h]
\begin{center}
\includegraphics[scale=0.75]{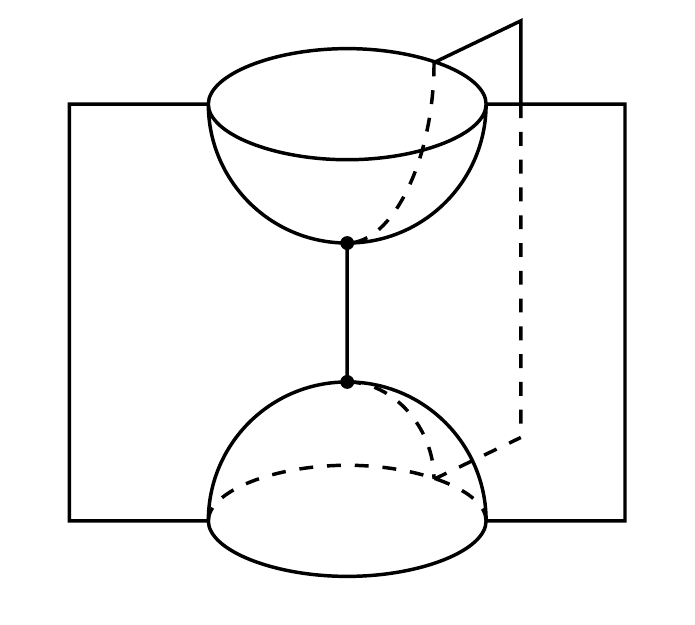}
 \raisebox{70pt}
 {$\begin{array}{c}
      \stackrel T\Longrightarrow\\{}\\\stackrel{\hskip1emT^{-1}}\Longleftarrow
  \end{array}$}
\includegraphics[scale=0.75]{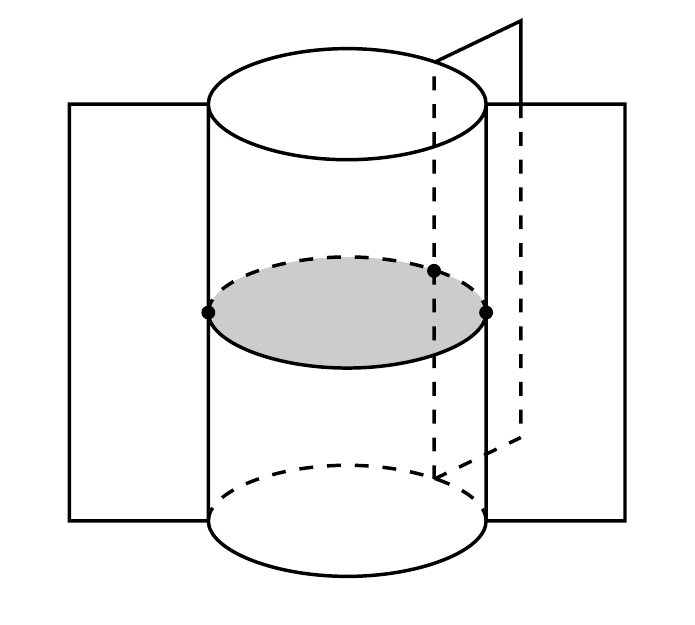}
\end{center}
\caption{The moves $T$ and $T^{-1}$.} \label{fig1}
\end{figure}

 For each $2$-component $\xi$ of a special polyhedron $P$, there is a characteristic map $f: D^{2} \to P$, which carries the interior of the disc $D^{2}$ onto $\xi$ homeomorphically and which restricts  to a local embedding on   $S^{1} = \partial D^{2}$. We will call the curve $f|_{\partial D^{2}} : {\partial D^{2}} \to P$ (and its image $f|_{\partial D^{2}} (\partial D^{2})$) the \emph{boundary curve} $\partial \xi$ of $\xi$. Denote by $A^2 \cup D^{2}$ the annulus $S^{1} \times I$ with a disc $D^{2}$ attached along its middle circle, and denote by $M^2 \cup D^{2}$ a M\"obius band with a disc $D^{2}$ attached along
its middle circle. We say that the boundary curve $\partial \xi$ has a \emph{trivial} (respectively,  \emph{nontrivial})  normal bundle  if the characteristic
map $f : D^{2} \to P$  of~$\xi$   extends to a local embedding $f^{(A^2\cup D^{2})} : A^2 \cup D^{2} \to P$  (respectively, to a local embedding $f^{(M^2\cup D^{2})} : M^2 \cup D^{2} \to P$).
 We say that the boundary curve $\partial \xi$ is \emph{short}, if it traverses precisely three true vertices of $P$ and passes through each of them only once.

We now state a condition on a special polyhedron  allowing to apply to it the move $T^{-1}$.

\begin{remark}
 \label{remark:adm_T^{-1}}
 A special polyhedron allows a move $T^{-1}$ if and only if   at least one of its boundary curves is short and has   a trivial normal bundle.
\end{remark}

 The notion of a virtual manifold was introduced in \cite{Mat2009}.

\begin{definition}
  \label{def:virt_manifold}
 We say that two special polyhedra are equivalent if one of them can be transformed into the other one  by a finite sequence of moves $T^{\pm1}$. A  virtual 3-manifold is the equivalence class $[P]$ of a special polyhedron~$P$.
\end{definition}

 The moves $T$ and $T^{- 1}$ do not change the Euler characteristic   of a special polyhedron  and the number of its $2$-components, whose boundary curves   have non-trivial normal bundles. This implies the following lemma.
\begin{lemma}
 \label{lemma:number_of_nontrivial}
  The Euler characteristic $\chi(P) $  of a special polyhedron $P$ and the number of $2$-components of $P$, whose boundary curves  have non-trivial normal bundles, are invariants of the virtual $3$-manifold $[P]$.
\end{lemma}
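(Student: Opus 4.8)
The plan is to unwind the definitions and reduce everything to the behaviour of a single move. By Definition~\ref{def:virt_manifold}, the virtual manifold $[P]$ is the equivalence class of $P$ under finite sequences of moves $T^{\pm1}$; consequently, any function of special polyhedra that is unchanged by a single move $T$ is automatically unchanged by $T^{-1}$ (run the move backwards) and hence descends to a well-defined invariant of $[P]$. So it suffices to verify that the move $T$ preserves (a)~the Euler characteristic $\chi(P)$, and (b)~the number $n(P)$ of $2$-components of $P$ whose boundary curves have non-trivial normal bundle. Both facts were recorded in the discussion preceding the lemma; below I indicate how I would justify them.

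For (a), I would use the tautological CW structure on a special polyhedron, in which the $0$-cells are the true vertices, the $1$-cells are the edges, and the $2$-cells are the $2$-components, so that $\chi(P)$ is the alternating sum (number of true vertices) $-$ (number of edges) $+$ (number of $2$-components). The move $T$ is supported in a regular neighbourhood of the subpolyhedron drawn on the left of Fig.~\ref{fig1}, and inspecting Fig.~\ref{fig1} one sees that it adds exactly one true vertex, two edges, and one $2$-component; hence $\Delta\chi = 1 - 2 + 1 = 0$.

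For (b), locality is again the key. Outside a regular neighbourhood $N$ of the modified subpolyhedron, $P$ together with its stratification is untouched, so every $2$-component disjoint from $N$ survives the move with an unchanged normal bundle. It remains to analyze the finitely many $2$-components meeting $N$. From the local model of Fig.~\ref{fig1} one checks that: (i) the $2$-component created by $T$ is an embedded disc whose boundary curve is short and has trivial normal bundle — consistently with Remark~\ref{remark:adm_T^{-1}}, since $T^{-1}$ applied to it undoes $T$ — so it does not contribute to $n$; and (ii) for each pre-existing $2$-component $\xi$ incident to $N$, the move only reroutes the boundary curve $\partial\xi$ through the altered part of the singular graph without changing the transverse (mod $2$) framing it carries, i.e.\ without changing whether the characteristic map of $\xi$ extends to a local embedding of $A^2\cup D^2$ or of $M^2\cup D^2$. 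Therefore $n(P)$ is unchanged by $T$, and the lemma follows.

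The only genuinely delicate point is step~(ii): one must trace in the picture of Fig.~\ref{fig1} exactly how the boundary curves of the $2$-components touching the modified region run along the edges and around the true vertices before and after the move, and confirm that the normal-bundle data they carry is preserved. This is a finite, local check on the figure; everything else is routine bookkeeping.
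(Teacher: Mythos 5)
Your proposal is correct and follows the same route as the paper, which simply asserts (in the paragraph preceding the lemma) that the moves $T^{\pm1}$ preserve the Euler characteristic and the number of $2$-components with non-trivial normal bundles, and notes that the lemma follows. You supply more detail than the paper does — the cell count $\Delta\chi = 1-2+1 = 0$ and the local analysis of the new and rerouted $2$-components — and you correctly flag the one point (preservation of the normal-bundle type of pre-existing $2$-components under the local modification) that requires an explicit check on Fig.~\ref{fig1}, which the paper itself leaves implicit.
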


 Each special polyhedron $P$ determines a $3$-manifold $W(P)$ with nonempty boundary and $RP^2$-singularities (see \cite{Mat2009}, \cite[\S 1.1.5 ]{MatBook}).

\begin{definition}
 \label{def:sing_manifold}
 A compact three-dimensional polyhedron $W$ is called a  $3$-manifold with $RP^2$-singularities  if the link of any point of $W$ is either a $2$-sphere, or a $2$-disc, or $RP^2$. The set of points of $W$ whose links are $2$-discs form the boundary $\partial W$ of $W$.
\end{definition}

 Let us describe a construction of the manifold $W(P)$ following \cite{Mat2009}. Replacing each true vertex of $P$ by a $3$-ball and each triple line of $P$ by a handle of index $1$, we obtain a (possibly, non-orientable) handlebody $H$ such that each $2$-component $\xi$ of $P$ meets $\partial H$ along a single circle $c=\xi\cap \partial H$. The rest of the  construction involves   two cases, depending on the type of the normal bundle of the boundary curve $\partial \xi$. If the normal bundle of $\partial \xi$ is trivial,  then a regular
neighborhood $N(c)$ of   $c$ in $\partial H$ is an annulus. In this
case we thicken the disc $\xi\setminus (\xi\cap H)$   to an index $2$ handle. If the normal bundle of $\partial \xi$ is non-trivial,  then a regular neighborhood $N(c)$ of   $c$ in $\partial H$ is a M\"obius strip. In this case we attach  to the boundary $\partial H$ of $H$ a disc $D^2$ along the circle $\partial N(c)$ and replace   $\xi\setminus (\xi\cap H)$ with a cone over the projective space $RP^2=N(c) \cup D^2$. Doing so for all $2$-components of $P$, we obtain a $3$-manifold $W(P)$ with nonempty boundary and $RP^2$-singularities.

 The following theorem shows that each virtual manifold determines a $3$-manifold with  $RP^2$-singularities and nonempty boundary.

\begin{theorem} \cite[Theorem 3]{Mat2009}
 \label{thm:surjection}
 The assignment $P \to W(P)$ yields a well defined surjection
$\varphi : \mathcal V \to \mathcal W$ from the set $\mathcal V$ of all virtual $3$-manifolds onto the set $\mathcal W$ of all   $3$-manifolds with  $RP^2$-singularities and nonempty boundary.
\end{theorem}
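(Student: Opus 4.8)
The plan is to establish two things: that $\varphi$ is well defined, i.e. $W(P)\cong W(P')$ whenever the special polyhedra $P$ and $P'$ are equivalent, and that $\varphi$ is surjective. By Definition~\ref{def:virt_manifold} well-definedness reduces to the case when $P'$ is obtained from $P$ by a single move $T$ (and then the case of $T^{-1}$ follows automatically). Surjectivity is an existence statement, to be proved by producing a suitable special spine of an arbitrary $W\in\mathcal W$.

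\emph{Invariance under $T$.} The key point is that $T$ is a \emph{local} move supported in a ball. By Remark~\ref{remark:adm_T^{-1}}, the $2$-component created by $T$ (the one shown on the right of Fig.~\ref{fig1}) is short and has trivial normal bundle, so the two subpolyhedra exchanged in Fig.~\ref{fig1} are proper subpolyhedra with the same frontier, and no non-trivial normal bundle occurs anywhere in this picture. I would trace the subpolyhedra through the construction of $W(\cdot)$: the left one contributes two $3$-balls (its two true vertices), index-$1$ handles (its edges) and index-$2$ handles (its $2$-cells, all with trivial normal bundle), which fit together into a single $3$-ball $B\subset W(P)$; thus $W(P)$ is obtained from the complementary manifold $W(P)\setminus\mathrm{int}\,B$ by regluing $B$ along a fixed pattern on $\partial B\cong S^2$. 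The right subpolyhedron, whose one additional true vertex and one additional $2$-cell are handled in exactly the same way, assembles into a $3$-ball $B'$ with the same boundary pattern — concretely, $B$ and $B'$ are the two ways of subdividing a bipyramid into two or into three tetrahedra, and $T$ coincides with the Matveev--Piergallini move performed inside this bipyramid. Since $W(P)\setminus\mathrm{int}\,B=W(P')\setminus\mathrm{int}\,B'$ with identical gluing data, $W(P)\cong W(P')$. As the ball $B$ is disjoint from all $RP^2$-cone points of $W(P)$, the argument is unaffected by the $RP^2$-singularities.

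\emph{Surjectivity.} Given $W\in\mathcal W$, I would exhibit a special polyhedron $P$ with $W(P)\cong W$ by reversing the construction of $W(\cdot)$. Removing from $W$ an open cone neighborhood of each $RP^2$-singular point yields a compact $3$-manifold $W_0$ whose boundary is $\partial W$ together with one projective plane $RP^2_i=N(c_i)\cup D^2_i$ for each such point. By Casler's theorem, in the form used by Matveev (see \cite{Mat2009} and \cite[\S 1.1.5]{MatBook}), $W_0$ admits a special spine $Q$; since $W_0$ is a manifold, every $2$-component of $Q$ has trivial normal bundle and $W(Q)$ is a regular neighborhood of $Q$ in $W_0$, hence $W_0$ itself. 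I would then enlarge $Q$ inside a collar of each boundary component $RP^2_i$ by adjoining the M\"obius band $N(c_i)$, so that its core circle becomes, in the enlarged polyhedron $P$, the boundary curve of a new $2$-component with non-trivial normal bundle, and I would perform small local adjustments to keep $P$ special. On the level of $W(\cdot)$, passing from $Q$ to $P$ replaces each boundary component $RP^2_i$ of $W_0$ by a cone on $RP^2_i=N(c_i)\cup D^2_i$, which is precisely the inverse of the earlier removal of the cone neighborhoods. Hence $W(P)\cong W$, i.e. $\varphi([P])=W$. If the polyhedron obtained along the way is only simple, I would make it special by subdivisions and bubble moves supported in balls, which do not change $W(P)$.

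\emph{Main obstacle.} The crux of the first part is the figure-by-figure verification that $B$ and $B'$ are glued to $W(P)\setminus\mathrm{int}\,B$ by the same data — in particular, that $T$ never converts a trivial normal bundle into a non-trivial one, which is already guaranteed by the observation preceding Lemma~\ref{lemma:number_of_nontrivial}. For surjectivity the delicate part is the construction and the normal-bundle bookkeeping near the $RP^2$-singular points: one must ensure that the enlarged polyhedron $P$ is genuinely special, that exactly its $2$-components with non-trivial normal bundle account for the cone points, and that $\partial W$ reappears as the boundary of the handlebody-with-$2$-handles, so that $W(P)$ equals $W$ and not merely some other $RP^2$-singular $3$-manifold.
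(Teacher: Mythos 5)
First, a point of comparison: the paper does not prove Theorem~\ref{thm:surjection} at all --- it is imported verbatim from \cite[Theorem 3]{Mat2009} --- so there is no in-paper argument to measure yours against. Your two-part plan (invariance of $W(\cdot)$ under a single $T$-move, then surjectivity by reversing the construction of $W(P)$) is the natural one, and the well-definedness half is sound in outline. One phrasing to repair there: the $2$-cells of the fragment in Fig.~\ref{fig1} are in general only pieces of global $2$-components of $P$, so they do not individually ``contribute index-$2$ handles'' contained in $B$; what you actually want is that $B$ is a regular neighbourhood of the fragment in $W(P)$, that it meets each relevant handle in a ball, and that it misses every cone point because those sit over the parts of the $2$-components far from the singular graph. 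With that correction the $T$-invariance argument is the standard one.

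The surjectivity half is where the genuine gap lies. The step ``enlarge $Q$ by adjoining the M\"obius band $N(c_i)$ so that its core becomes the boundary curve of a new $2$-component with non-trivial normal bundle'' is not yet a construction: you must also attach a disc $D^2_i$ along that core (otherwise no new $2$-component is created), you must specify how $\partial N(c_i)$ is glued into $Q$, and the complement of the core and of $\partial N(c_i)$ in $N(c_i)$ is an open annulus, so the resulting polyhedron is not special and the ``small local adjustments'' you defer are not optional. More importantly, even after these repairs the conclusion $W(P)\cong W$ is asserted rather than derived: $W(P)$ is built abstractly from $P$ (handlebody, $2$-handles, and one cone over $N(c_i)\cup D^2_i$ per singular point), and checking that this abstract reconstruction reproduces $W$ --- with $\partial W$ as its only boundary and with exactly the prescribed cone points, rather than some other singular manifold with the same nonsingular part --- is precisely the substance of Matveev's proof. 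Your ``main obstacle'' paragraph correctly identifies where the difficulty sits, but the proposal stops at the point where that work begins.
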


 If the boundary curves of all $2$-components of a special polyhedron $P$ have trivial normal bundles, then, by construction, $W(P)$ is a genuine $3$-manifold without singularities and, moreover,   $P$ is embedded into   $W(P)$ so that $W(P)\setminus P$ is homeomorphic to $\partial W(P)\times (0,1]$. In this case, $P$ is a spine of $W(P)$ in the   following sense.

\begin{definition}
 Let $M$ be a connected compact $3$-manifold. A compact two-dimensional polyhedron $P\subset M$ is called a spine  of $M$ if either $\partial M \neq \emptyset$ and $M\setminus P$ is homeomorphic to $\partial M \times (0,1]$ or $\partial M = \emptyset$ and $M \setminus P$ is homeomorphic to an open ball.
\end{definition}

 A spine of a disconnected $3$-manifold is the union of spines of its connected components.

A spine of a $3$-manifold is said to be \emph{special} if it is a special
polyhedron.

\begin{theorem} \cite{Casler}
 \label{thm:Casler}
Any compact $3$-manifold has a special spine.
\end{theorem}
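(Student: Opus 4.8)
This is B.~G.~Casler's theorem; the plan is to first produce \emph{some} spine of $M$ and then upgrade it to a special one. Since a spine of a disconnected manifold is, by definition, the union of spines of its components, it suffices to treat a connected compact $M$. If $\partial M=\emptyset$, we would remove an open $3$-ball from the interior of $M$ to obtain $M'$ with $\partial M'=S^2$: if $P$ is a special spine of $M'$, then $M'\setminus P\cong S^2\times(0,1]$, and regluing the ball along $S^2\times\{1\}$ converts this half-open collar into an open $3$-ball, so $M\setminus P$ is an open ball and $P$ is a special spine of $M$. Hence we may assume that $M$ is connected and $\partial M\neq\emptyset$.

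Next, fix a triangulation $T$ of $M$ (Moise). Since $\partial M\neq\emptyset$, the manifold $M$ collapses onto a $2$-dimensional subcomplex of a subdivision of $T$: for example, $M$ admits a handle decomposition with no handles of index $3$, and its $2$- and $3$-handles collapse onto the union of the $0$- and $1$-handles together with a collar of $\partial M$; a convenient concrete model is the $2$-skeleton of the cell decomposition dual to $T$, truncated near $\partial M$. After one more subdivision this model is genuinely $2$-dimensional, and we obtain a spine $P$ of $M$ which is a $2$-dimensional polyhedron.

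It then remains to improve $P$ to a special polyhedron. First one makes $P$ \emph{simple}: by modifications supported in balls of $M$ — putting the self-intersection lines of the sheets in general position, splitting a line along which more than three sheets meet into triple lines, deleting isolated non-generic points, and collapsing free faces, each operation preserving the property of being a spine — one reaches a simple polyhedron. A simple spine fails to be special only because of three kinds of defect: a $2$-component that is not an open $2$-cell, a closed triple circle, and possibly the complete absence of true vertices. These are removed one by one by local moves supported in balls, each strictly decreasing the complexity
\[
c(P)=\#\{\text{closed triple circles of }P\}+\sum_{\xi}\bigl(1-\chi(\xi)\bigr),
\]
the sum running over the $2$-components $\xi$ of $P$: if some $\overline{\xi}$ is not a disc, one chooses a proper arc in $\overline{\xi}$ essential in $\xi$ and, using the product-like structure of a regular neighbourhood of $\xi$ in $M$, cuts $P$ along this arc and reglues, replacing $\xi$ by $2$-components of strictly smaller complexity while the result still spans a collar; a closed triple circle is broken similarly. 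When $c(P)=0$, the polyhedron $P$ is a simple spine all of whose $1$-strata are open $1$-cells and all of whose $2$-components are open $2$-cells; if $P$ then still has no true vertex — which forces $P$ to be a closed surface — one final local ``bubbling'' move inserts a pair of true vertices into one of its $2$-components. The outcome is a special spine of $M$.

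The delicate point is this last step: one must verify that each simplifying move can be realised inside a ball, or inside a regular neighbourhood of the piece being altered, so that the modified polyhedron is still a spine of the \emph{same} manifold $M$, and that the moves are carried out in the right order — make simple, then cellulate the $2$-components and break the triple circles, then create true vertices — so that curing one defect cannot recreate another of equal or larger complexity. The monotonicity of $c(P)$ then guarantees termination. This local bookkeeping for simple and special polyhedra is exactly what is developed in \cite{MatBook}.
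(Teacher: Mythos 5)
The paper offers no proof of this statement: it is imported verbatim from Casler's 1965 paper (see also \cite[Theorem 1.1.13]{MatBook}), so there is no in-text argument to compare yours against. Your reduction to connected $M$ with $\partial M\neq\emptyset$ is correct, and extracting a $2$-dimensional spine from (the dual of) a triangulation is indeed how the classical proof begins.

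The genuine gap is in your upgrade from a simple spine to a special one. The operations you invoke --- ``cut $P$ along a proper essential arc of $\overline{\xi}$ and reglue'', breaking a closed triple circle ``similarly'', and the final ``bubbling'' that inserts true vertices into a closed-surface spine --- are not well-defined spine-preserving moves, and this is exactly where the difficulty of the theorem lives. Cutting open or deleting part of a $2$-component merges two complementary components of $M\setminus P$, while a bubble creates a new complementary ball; in either case the modified polyhedron is in general a spine of $M$ with a puncture added or removed, not of $M$. You assert that each move decreases your complexity $c(P)$ and that the result ``still spans a collar'', but no mechanism is given for why the complement remains $\partial M\times(0,1]$, and in some cases no local move inside a ball can do the job at all (compare the passage from the point spine of $B^3$ to Bing's house, or from a closed surface spine of an $I$-bundle to any special spine). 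The standard proof sidesteps this entirely: the union of the boundaries of the $0$-, $1$- and $2$-handles of the handle decomposition dual to a triangulation is \emph{already} a special polyhedron (essentially the $\cup_i R_i$ construction used in Section 6.1 of the paper), and it is a spine of $M$ with finitely many extra open balls removed, one for each tetrahedron. The only correction needed is to merge these extra complementary balls one at a time, and this is done not by deleting $2$-cells (which destroys speciality) but by the \emph{arch construction} of \cite[Ch.~1]{MatBook}, a move designed precisely to join two adjacent complementary balls while keeping the polyhedron special. Your write-up is missing exactly this device, and with it both the termination argument and the verification that the underlying manifold never changes.
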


 The role of the moves $T$ and $T^{- 1}$ is clear from the following theorem.

\begin{theorem} \cite[Theorem 1.2.5]{MatBook}
 \label{thm:equivalence}
 Let $P_1$ and $P_2$ be special polyhedra having at least two true vertices each. Then the following holds:
\begin{enumerate}
 \item If $P_1$ and $P_2$ are special spines of the same $3$-manifold, then $P_1$ can be transformed into $P_2$ by a finite sequence of moves $T^{\pm1}$.
 \item If $P_1$ can be transformed into $P_2$ by a finite sequence of moves $T^{\pm1}$ and if $P_1$ is a special spine of a $3$-manifold, then $P_2$ is a special spine of the same $3$-manifold.
\end{enumerate}
\end{theorem}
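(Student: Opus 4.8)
The plan is to handle the two parts by quite different means: part~(2) by a purely local argument about collapses, and part~(1) by dualizing special spines to ideal triangulations and invoking a Pachner-type theorem. I begin with~(2), which is the easier half. Recall that a compact polyhedron $P\subset M$ is a spine of $M$ exactly when $M$, with an open collar of $\partial M$ removed, collapses onto $P$. The move $T$ is \emph{local}: there is a PL $3$-ball $B\subset M$ meeting $P_1$ in the left-hand subpolyhedron of Fig.~\ref{fig1}, disjoint from the rest of $P_1$, with $P_2=(P_1\setminus B)\cup(P_2\cap B)$ and $P_1\cap\partial B=P_2\cap\partial B$. Reading off Fig.~\ref{fig1} one checks that $B$ collapses onto $P_1\cap B$ and, independently, onto $P_2\cap B$, fixing the common boundary pattern in $\partial B$. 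Substituting the collapse $B\searrow P_2\cap B$ for $B\searrow P_1\cap B$ inside a global collapse of $M$ (minus the collar) onto $P_1$ produces a collapse onto $P_2$; since $T^{\pm1}$ visibly carries special polyhedra to special polyhedra, $P_2$ is a special spine of the same $M$. Symmetry handles $T^{-1}$, and iterating handles an arbitrary sequence of moves.

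For part~(1) I would first pass to the dual picture. Assume $\partial M\ne\emptyset$ (the closed case reduces to this by puncturing, or is treated by the parallel statement for one-vertex triangulations). A special spine $P$ of $M$ induces a handle decomposition of $M$, with a $0$-handle for each true vertex, a $1$-handle for each edge, and a $2$-handle for each $2$-component; its dual is an ideal triangulation $\mathcal T_P$ of $M$ having one tetrahedron per true vertex of $P$ and one ideal vertex per component of $\partial M$. Under this correspondence a $T$-move on $P$ is precisely a $2\to 3$ bistellar move on $\mathcal T_P$, and $T^{-1}$ is a $3\to 2$ move. Hence it suffices to prove that any two ideal triangulations of $M$, each with at least two tetrahedra, are connected by a finite sequence of $2\leftrightarrow 3$ moves; translating the resulting sequence back through the duality yields the required sequence of moves $T^{\pm1}$ from $P_1$ to $P_2$.

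The last reduction is where the genuine difficulty sits, and it is the main obstacle. Pachner's theorem connects any two triangulations of $M$ by the moves $1\leftrightarrow 4$ and $2\leftrightarrow 3$, but the $1\leftrightarrow 4$ moves change the number of (ideal) vertices — here forbidden — and must be removed. The plan is to show that, as long as one always retains a spare tetrahedron, a $1\to 4$ move can be re-expressed through $2\leftrightarrow 3$ moves together with a local ``bubble'' creation/annihilation, and that the bubble move itself decomposes into $2\leftrightarrow 3$ moves; this is exactly the step that uses the hypothesis of at least two true vertices, guaranteeing that the tetrahedron count never drops to one (which would block the moves). Pushing this normalization through, and verifying that every local picture that occurs is, after dualizing, the one in Fig.~\ref{fig1}, completes the argument; the details are carried out in \cite[Ch.~1]{MatBook}.
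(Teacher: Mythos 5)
The paper does not prove this statement at all: it is imported verbatim as \cite[Theorem 1.2.5]{MatBook}, so there is no internal proof to compare against. Judged on its own terms, your argument for part~(2) is essentially the standard one and is fine: the move is supported in a PL ball $B$ meeting the spine in the fragment of Fig.~\ref{fig1}, both fragments are spines of $B$ with the same trace on $\partial B$, so substituting one for the other preserves the property of being a special spine of $M$ (one should say a word about why the new fragment can be embedded in $B$ with the prescribed boundary pattern, but that is routine).

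Part~(1) is where the genuine gap lies. The dualization to ideal triangulations and the identification of $T^{\pm 1}$ with the $2\leftrightarrow 3$ bistellar moves are correct, but the statement you reduce to --- that any two ideal triangulations of $M$ with at least two tetrahedra each are connected by $2\leftrightarrow 3$ moves alone --- is not Pachner's theorem and is not a weaker statement; it is precisely Theorem~1.2.5 in dual form (the Matveev--Piergallini theorem). Your proposed normalization is only a plan, and its key assertion is wrong as stated: a bubble move (dually, the $0\to 2$ move on spines) does \emph{not} decompose into $2\leftrightarrow 3$ moves --- it changes the topology of the complement and, on the spine side, creates a $2$-sphere component, so no sequence of $T^{\pm 1}$ moves can reproduce it. The actual difficulty, and the place where the hypothesis of at least two true vertices is used, is in eliminating the \emph{lune} move $L$ (dually, the $0\leftrightarrow 2$ face move): one first shows two special spines of $M$ are related by $T^{\pm1}$ and $L^{\pm1}$ via a general-position/membrane argument, and then expresses $L$ through $T^{\pm1}$ on polyhedra with $\geq 2$ vertices. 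Deferring exactly this step to \cite[Ch.~1]{MatBook} means the hard half of the theorem has not been proved but merely cited, just as the paper itself does.
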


\begin{remark} \label{remark:[P]_of_spines}
 Let $P$ be a special spine of a $3$-manifold $M$. Suppose that $P$ has   at least two true vertices. Theorem~\ref{thm:equivalence} implies that the equivalence class $[P]$ of $P$ consists precisely of the special spines of  $M$ with at least two true vertices.
\end{remark}

\begin{remark}
 Consider the restriction of the map $\varphi : \mathcal V \to \mathcal W$ from Theorem \ref{thm:surjection} to the set of classes of special spines with at least two   true vertices.   Theorems \ref{thm:Casler} and \ref{thm:equivalence} imply that this restriction is a bijection on the set of   compact 3-manifolds with nonempty boundary.
\end{remark}

 Turaev-Viro invariants~\cite{TV} play an important role in the theory of three-dimensional manifolds. The following theorem justifies their use   in the study of   virtual manifolds.

\begin{theorem} \cite[Theorem 4]{Mat2009}
 \label{thm:TVextension}
 Turaev-Viro invariants of 3-manifolds   extend  to the class of
  virtual 3-manifolds.
\end{theorem}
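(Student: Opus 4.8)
The plan is to realize the Turaev--Viro invariant as a state sum defined directly on special polyhedra, and then to verify that this state sum takes equal values on polyhedra related by a move $T^{\pm1}$. First I would recall the combinatorial (spine) form of the invariant, i.e.\ Matveev's dual version of the construction of~\cite{TV}. Fix an admissible family of colors together with its quantum dimensions and $6j$-symbols. For an arbitrary special polyhedron $P$, sum over all admissible colorings of the set of $2$-components of $P$ (admissibility being the triple condition at each edge, where three $2$-components meet); to each coloring assign the product of the $6j$-symbols taken over the true vertices of $P$, multiplied by dimension weights attached to the $2$-components and the edges and by an appropriate power of the total quantum dimension. The essential point is that this recipe refers only to the stratified structure of $P$ --- its true vertices, edges, and $2$-components --- and hence yields a number $|P|$ for \emph{every} special polyhedron, regardless of whether $P$ is a spine of an honest $3$-manifold.

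Granting two facts, the theorem follows at once. First, if $P$ is a special spine of a compact $3$-manifold $M$ with $\partial M\neq\emptyset$, then $|P|$ equals the Turaev--Viro invariant of $M$; this is precisely the dual-spine reformulation of~\cite{TV}. Second, if $P_1$ and $P_2$ differ by a single move $T^{\pm1}$, then $|P_1|=|P_2|$. The second fact together with Definition~\ref{def:virt_manifold} gives a well-defined map $[P]\mapsto|P|$ on the set $\mathcal V$ of virtual $3$-manifolds, and the first fact identifies its restriction to classes of special spines with the classical Turaev--Viro invariant; this is the required extension.

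The crux --- and the step I expect to cause the most trouble --- is the invariance $|P_1|=|P_2|$ under a $T$-move. Because $T$ modifies $P$ only inside a ball-like region, replacing the subpolyhedron on the left of Fig.~\ref{fig1} by the one on the right, the check is local: one compares the contributions of the two subpolyhedra to the respective state sums with the colors on the frontier of the region held fixed. Here $T$ adds one true vertex, two edges, and one $2$-component while leaving $\chi(P)$ unchanged, so the new summation over the color of the new $2$-component, together with the new $6j$-symbols and the extra dimension and total-dimension factors, must collapse back to the old contribution. This reduces to the Biedenharn--Elliott (pentagon) identity for $6j$-symbols together with the orthogonality relation --- the very identities that make the Turaev--Viro state sum well defined --- so the equality follows by a direct local computation. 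The only genuine labor is the bookkeeping of the normalization weights, which is forced to balance precisely by the invariance of the Euler characteristic under $T$.
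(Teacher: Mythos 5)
The paper does not actually prove this statement---it is imported verbatim as \cite[Theorem 4]{Mat2009}---but the argument you sketch is precisely the one behind it and behind the state-sum formula $w(R)=\sum_{\zeta}w(\zeta,R)$ recalled in Section~\ref{section4}: the sum is defined purely by the stratification of an arbitrary special polyhedron, agrees with the invariant of \cite{TV} on special spines, and its invariance under $T^{\pm1}$ is a local identity on $6j$-symbols, so your proposal matches the intended proof. Two minor corrections to your bookkeeping: since the equivalence of Definition~\ref{def:virt_manifold} is generated by the moves $T^{\pm1}$ alone, only the Biedenharn--Elliott identity is needed (orthogonality enters only for the lune move, which is not among the generating moves here), and in the version used in this paper, following \cite{MatBook}, the weight of a coloring is just the product of $6j$-symbols over true vertices times the weights $w_{\zeta(u)}$ of the $2$-components, with no edge weights and no power of the global quantum dimension to balance.
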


\section{Complexity of virtual 3-manifolds}
 \label{section3}

\begin{definition}
 The  complexity  $\operatorname{cv}[P]$ of a virtual 3-manifold $[P]$ is equal to $k$ if the equivalence class $[P]$ contains a special polyhedron with $k$ true vertices and contains no special polyhedra with a smaller number of true vertices.
\end{definition}

 We establish some properties of the complexity $\operatorname{cv}[P]$.

\begin{lemma}
 \label{lemma:elementary_properties}
 Let $P$ be a special polyhedron. Then the following statements hold.
\begin{enumerate}
 \item $\operatorname{cv}[P] \geq 1$.
 \item The following conditions are equivalent:
 \begin{enumerate}
  \item $\operatorname{cv}[P] = 1$.
  \item $P$ has exactly one true vertex.
  \item $[P]=\{P\}$.
 \end{enumerate}
 \item If $P$ has exactly two true vertices then $\operatorname{cv}[P] = 2$.
\end{enumerate}
\end{lemma}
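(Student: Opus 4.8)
The plan is to reduce all three statements to simple bookkeeping with the moves $T^{\pm1}$, using only facts from Section~\ref{section2}: every special polyhedron has at least one true vertex; a $T$-move is defined on a special polyhedron with at least two true vertices and raises the number of true vertices by one; a $T^{-1}$-move is available only when some boundary curve is short (Remark~\ref{remark:adm_T^{-1}}), which by definition requires three distinct true vertices; and, as discussed below, a $T$-move is in fact always available on a \emph{connected} special polyhedron with at least two true vertices. Throughout we take $P$ connected. Statement (1) is then immediate, since every special polyhedron representing $[P]$ has at least one true vertex.

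For statement (2) I would prove the cycle (b)$\Rightarrow$(c)$\Rightarrow$(a)$\Rightarrow$(b). If $P$ has exactly one true vertex, then no $T$-move applies to $P$ (a $T$-move needs two true vertices) and no $T^{-1}$-move applies to $P$ (there is no short boundary curve, as that needs three true vertices), so $P$ admits no move and $[P]=\{P\}$; this is (c). If $[P]=\{P\}$, then $\operatorname{cv}[P]$ is just the number of true vertices of $P$; moreover no $T$-move can be applied to $P$, since its output would be a second element of $[P]$; as a $T$-move is available on every connected special polyhedron with at least two true vertices, $P$ has fewer than two true vertices, hence exactly one, so $\operatorname{cv}[P]=1$; this is (a). Finally, if $\operatorname{cv}[P]=1$, choose $P'\in[P]$ with exactly one true vertex; by the first implication $[P']=\{P'\}$, and since $P\in[P]=[P']$ we get $P=P'$, so $P$ has exactly one true vertex; this is (b).

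Statement (3) is then a formality: if $P$ has exactly two true vertices, then $\operatorname{cv}[P]\le 2$ (witnessed by $P$ itself) and $\operatorname{cv}[P]\ge 1$ by (1), while $\operatorname{cv}[P]=1$ is impossible because by the equivalence (a)$\Leftrightarrow$(b) it would force $P$ to have a single true vertex; hence $\operatorname{cv}[P]=2$.

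The one substantive point is the italicized claim above, and it is where connectedness is genuinely used. I would prove it as follows: since every $2$-component of $P$ is attached along a closed curve contained in a single connected component of the singular graph, connectedness of $P$ implies connectedness of its singular graph; being a connected $4$-regular graph with at least two vertices, the singular graph then contains an edge joining two distinct true vertices, and a $T$-move can be performed in a regular neighborhood of such an edge (the local picture of Fig.~\ref{fig1}; alternatively, one may invoke the corresponding statement in~\cite{MatBook}). Note that connectedness cannot be dropped: a disjoint union of two special polyhedra each with one true vertex admits no move, so it would violate (c)$\Rightarrow$(b).
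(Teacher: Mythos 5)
Your proof is correct and follows the same elementary route as the paper, which disposes of all three claims by observing that a one-vertex special polyhedron admits no move $T^{\pm1}$ and is therefore alone in its equivalence class. The only difference is one of detail: the paper declares part (2) ``obvious,'' whereas you explicitly supply the implication (c)$\Rightarrow$(b) by showing that a connected special polyhedron with at least two true vertices always admits a $T$-move along an edge with distinct endpoints --- a point the paper leaves unstated, and which, as you correctly observe, is exactly where connectedness of $P$ is needed.
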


\begin{proof}
 1. The claim follows from the definition, since each special polyhedron has at least one true vertex.

 2.  The claim is obvious, since each one-vertex special polyhedron is the only element in its equivalence class.

 3. If $P$ has exactly two true vertices, then $\operatorname{cv}[P] \leqslant 2$. The second claim of the lemma implies that $\operatorname{cv}[P] \neq 1$. This completes the proof.
\end{proof}

 Note the following relationship between the number of $2$-components of a special polyhedron and the number of boundary components of the corresponding manifold with $RP^{2}$-singularities.

\begin{lemma}
 \label{lemma:d>=b}
 Let $P$ be a special polyhedron with $d$ $2$-components. Denote by $b$ the number of boundary components of the $3$-manifold  with $RP^2$-singularities $W(P)$. Then   $d \geq b$.
\end{lemma}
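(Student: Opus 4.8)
The plan is to analyze the construction of $W(P)$ and count boundary components directly. Recall that $W(P)$ is built from a handlebody $H$ (obtained by fattening the singular graph of $P$) by attaching, for each $2$-component $\xi$ of $P$, either an index-$2$ handle along an annular neighborhood $N(c)$ of $c = \xi \cap \partial H$ in $\partial H$ (when $\partial\xi$ has trivial normal bundle), or a disc $D^2$ along $\partial N(c)$ together with a cone (when $\partial\xi$ has nontrivial normal bundle, in which case $N(c)$ is a M\"obius band). First I would observe that $\partial H$ is a closed surface, and that the curves $c$ over all $2$-components $\xi$ cut $\partial H$ into pieces that are precisely the intersections of $\partial H$ with the fattened singular graph; in particular the complement in $\partial H$ of the (open) regular neighborhoods $\bigcup_\xi \operatorname{int} N(c)$ is a disjoint union of subsurfaces whose boundary circles are exactly the curves $\partial N(c)$.

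The key step is to track what happens to $\partial H$ under the attachments. When we attach an index-$2$ handle along an annulus $N(c) \subset \partial H$, the effect on the boundary surface is surgery along the core circle $c$: the surface $\partial H$ is replaced by $(\partial H \setminus \operatorname{int} N(c))$ capped off by two discs. When $N(c)$ is a M\"obius band and we attach $D^2$ along $\partial N(c)$ and cone off, the effect on the boundary is to remove $\operatorname{int} N(c)$ and cap the single resulting boundary circle $\partial N(c)$ with a disc (the cone point becomes an $RP^2$-singularity, not a boundary point). In either case, doing all these operations simultaneously, the boundary $\partial W(P)$ is obtained from the closed surface $\partial H$ by removing the open regular neighborhoods of all the curves $c$ and capping every resulting boundary circle with a disc. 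Hence $\partial W(P)$ is the result of capping off the surface $\Sigma := \partial H \setminus \bigcup_\xi \operatorname{int} N(c)$ along all of its boundary circles.

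Now $\Sigma$ is exactly the part of $\partial H$ coming from the fattened singular graph of $P$, which is connected (the singular graph of a special polyhedron is connected, as it has a true vertex and is $4$-regular, and $P$ itself is connected); therefore $\Sigma$ is a connected surface. Capping off the boundary circles of a connected surface yields a connected closed (or $RP^2$-singular-free at these points) surface, so $\partial W(P)$ has exactly one component arising from $\Sigma$ — but we must be slightly careful, since capping can only decrease, not increase, the number of components, and $\Sigma$ is connected. Thus $b = 1 \le d$ whenever $\Sigma$ is connected; in general $b$ is at most the number of components of $\Sigma$, which is at most... Actually the cleaner route, which I would adopt, is an Euler-characteristic/counting argument: each $2$-component contributes one curve $c$ along which we cap, and each such capping introduces at most... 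Let me instead argue that every boundary component of $W(P)$ contains at least one of the capping discs, and distinct boundary components contain disjoint sets of such discs; since there are at most $d$ capping discs in the trivial case (one per $2$-component) — and in the nontrivial case that $2$-component contributes its disc to the $RP^2$-singularity region rather than a new boundary component, which only helps — we get $b \le d$.

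The main obstacle I anticipate is making the bookkeeping of ``which capping disc lies on which boundary component'' precise, and in particular confirming that in the nontrivial-normal-bundle case the M\"obius-band region together with its capping disc $D^2$ genuinely becomes an $RP^2$ inside $W(P)$ whose cone point is an $RP^2$-singularity — so it does not contribute a boundary component at all. Once that is pinned down, the inequality follows because each boundary component of $W(P)$ must meet $\partial H$, hence meets $\Sigma$, hence contains at least one circle of the form $\partial N(c)$ with $\partial\xi$ of trivial normal bundle, and the assignment of such a circle to its $2$-component $\xi$ is injective on boundary components (a given $\xi$ gives rise to a single curve $c$, whose two capping discs — or one, after surgery — lie on a single boundary component). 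This yields $b \le d$, as claimed.
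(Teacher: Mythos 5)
Your reduction of the problem to the surface $\Sigma=\partial H\setminus\bigcup_\xi\operatorname{int}N(c)$ is sound as far as it goes: $\partial W(P)$ is obtained by capping off the boundary circles of $\Sigma$ with discs (two discs for each $2$-component with trivial normal bundle, one for each $2$-component with non-trivial normal bundle), and since $\partial H$ is connected every component of $\Sigma$ has non-empty boundary, so $b$ equals the number of components of $\Sigma$. But both arguments you then offer break down. First, $\Sigma$ is not connected in general --- it has exactly $b$ components, so asserting its connectedness is asserting $b=1$, which fails for any special spine of a manifold with disconnected boundary (and would make the paper's case $d=b\geq 2$ in Corollary~\ref{cor1} vacuous). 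Second, the disc count: a $2$-component with trivial normal bundle contributes \emph{two} capping discs, so there are up to $2d$ of them, and ``each boundary component contains at least one capping disc'' yields only $b\leq 2d$. Your proposed repair --- that the two capping discs of a given $\xi$ always lie on one boundary component --- is false whenever $b\geq 2$: those two discs are the two sides of the $2$-cell $\xi$, and if every $2$-component had both sides facing the same component of $W(P)\setminus P$, then a path between two such components, made transverse to $P$ and disjoint from the singular graph, could never switch components, forcing $b=1$. Hence some $\xi$ has its two discs on different boundary components and the injectivity you need fails.

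The missing idea is combinatorial. Form the graph $G$ whose vertices are the components of $\partial W(P)$ and whose edges are the $2$-components of $P$, each edge joining the one or two boundary components carrying its capping discs; this is precisely the incidence structure you set up. Connectedness of $W(P)$ shows $G$ is connected, giving $d\geq b-1$; note that even a correct version of your surgery bookkeeping (each annulus surgery increases the number of boundary components by at most one, each M\"obius surgery by zero) only yields $b\leq d+1$. To gain the last unit one must show $G$ is not a tree: around any triple line of $P$ the three adjacent wings and three adjacent wedge regions give a closed walk of length $3$ in $G$, and a tree, being bipartite, admits no odd closed walk; hence $G$ contains a cycle and $d=\#E(G)\geq\#V(G)=b$. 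This is the argument of Lemma 2.1 of \cite{FrigMarPet03}, to which the paper defers for this lemma; your proposal never produces the cycle and therefore cannot close the gap between $b\leq d+1$ (or $b\leq 2d$) and $b\leq d$.
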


 By the above construction, the $3$-manifold   with $RP^2$-singularities $W(P)$ has a non-empty boundary, i.e. $b \geq 1$. The proof of Lemma~\ref{lemma:d>=b} is similar to that of Lemma 2.1 in~\cite{FrigMarPet03}.

 Using Lemma~\ref{lemma:d>=b} we establish two-sided estimates of the complexity.

\begin{theorem}
 \label{thm:complexity_bounds}
Let $P$ be a special polyhedron with $n \geq 2$ true vertices and $d$ $2$-components. Denote by $b$ the number of boundary components of the $3$-manifold  with $RP^2$-singularities $W(P)$. Then
$$
n - (d-b) \leq \operatorname{cv}[P] \leq n.
$$
\end{theorem}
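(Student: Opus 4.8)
The plan is to prove the two inequalities separately, the upper bound being essentially immediate and the lower bound requiring a monovariant argument along $T^{-1}$-moves.

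First I would dispose of the upper bound $\operatorname{cv}[P] \le n$. Since $P$ itself is a special polyhedron in the equivalence class $[P]$ and has $n$ true vertices, by the very definition of $\operatorname{cv}$ as the minimum number of true vertices over the class, we immediately get $\operatorname{cv}[P] \le n$.

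For the lower bound I would argue as follows. Let $Q$ be a special polyhedron realizing the complexity, i.e.\ $Q \in [P]$ with exactly $\operatorname{cv}[P]$ true vertices. By Definition~\ref{def:virt_manifold} there is a finite sequence of moves $T^{\pm 1}$ carrying $P$ to $Q$. The key observation is to track the quantity $v(R) - (d(R) - b(R))$ along such a sequence, where for a special polyhedron $R$ we write $v(R)$ for the number of true vertices, $d(R)$ for the number of $2$-components, and $b(R)$ for the number of boundary components of $W(R)$. A single move $T$ increases both $v$ and $d$ by $1$, and $T^{-1}$ decreases both by $1$; I need to check that $b$ is unchanged by $T^{\pm 1}$. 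This should follow because $W(R)$ is determined up to homeomorphism by $[R]$ on the relevant subclass (the $2$-components with non-trivially-framed boundary curves are fixed by Lemma~\ref{lemma:number_of_nontrivial}, and Matveev's construction together with Theorem~\ref{thm:equivalence} shows $W$ descends to virtual manifolds), so in particular the number $b$ of boundary components of $W(R)$ is an invariant of $[R]$. Granting this, $v(R) - d(R)$ changes by $0$ under $T^{\pm 1}$ (the $+1$'s or $-1$'s cancel), and $b(R)$ is constant, so $v(R) - (d(R) - b(R))$ is invariant under the moves. Hence $n - (d - b) = v(Q) - (d(Q) - b(Q))$. Now apply Lemma~\ref{lemma:d>=b} to $Q$: $d(Q) \ge b(Q)$, so $d(Q) - b(Q) \ge 0$, giving $v(Q) - (d(Q) - b(Q)) \le v(Q) = \operatorname{cv}[P]$. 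Combining, $n - (d-b) \le \operatorname{cv}[P]$, as desired.

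The main obstacle is the claim that $b(R)$ — the number of boundary components of the associated $3$-manifold-with-$RP^2$-singularities $W(R)$ — is genuinely an invariant of the virtual manifold $[R]$. One cannot simply invoke Theorem~\ref{thm:surjection}, since $\varphi$ is only a surjection, not an injection, on all of $\mathcal V$; what is needed is that $W(R)$ (hence $b(R)$) is constant along a sequence of $T^{\pm 1}$ moves. I would verify this directly from Matveev's construction: a $T$-move is a local modification inside a ball, and one checks on the level of the handle decomposition of $W(R)$ that attaching/removing the relevant handles does not create or destroy boundary components (equivalently, that $W$ is well-defined on $\mathcal V$, which is part of the content behind Theorem~\ref{thm:surjection}). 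Alternatively, since on the subclass of special spines with at least two true vertices $\varphi$ restricts to a bijection onto compact $3$-manifolds with boundary, for that subclass $b(R)$ is patently an invariant; the general case follows because the $RP^2$-singular part is pinned down by the framings of the boundary curves, which are virtual-manifold invariants by Lemma~\ref{lemma:number_of_nontrivial}. Once this invariance is in hand, the rest is the bookkeeping above.
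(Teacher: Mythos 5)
Your proposal is correct and follows essentially the same route as the paper: the paper also gets the upper bound from the definition, deduces the invariance of $v-d$ from the invariance of the Euler characteristic $\chi = d - n$ (Lemma~\ref{lemma:number_of_nontrivial}), uses the well-definedness of $\varphi$ from Theorem~\ref{thm:surjection} to conclude $W(P')\cong W(P)$, and applies Lemma~\ref{lemma:d>=b} to the minimizing polyhedron. Your worry about invoking Theorem~\ref{thm:surjection} is unfounded — what is needed is only that $\varphi$ is well defined on equivalence classes (so $b$ depends only on $[P]$), and that is explicitly part of the theorem's statement, exactly as the paper uses it.
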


\begin{proof}
 The definition of the complexity implies the right inequality $\operatorname{cv}[P] \leq n$. Let us prove the left inequality. Since the special graph of any special polyhedron is   4-regular,   $\chi(P) = n - 2n + d = d - n$. In the class $[P]$ we choose a special polyhedron with $\operatorname{cv}[P]$ true vertices and denote it by $P '$. Let $m$ be the number of $2$ -components of the polyhedron $P'$. Then   $\chi(P') = m - \operatorname{cv}[P]$. The equality $\chi(P') = \chi(P)$ implies that $\operatorname{cv}[P] = n - d + m$. Since the   map $\varphi : \mathcal V \to \mathcal W$ considered above is well defined, the manifolds $W(P')$ and $W(P)$ are homeomorphic. Applying Lemma~\ref{lemma:d>=b} to the polyhedron $P '$, we get $m \geq b$. Therefore, $\operatorname{cv}[P] \geq n - d + b$, which completes the proof of the theorem.
\end{proof}

\begin{corollary} \label{cor1}
 If, under the assumptions of Theorem~\ref{thm:complexity_bounds}, we have   $d = b$, then $\operatorname{cv} [P] = n$. In particular,  if the special polyhedron $P$ has exactly one $2$-component, then $\operatorname{cv} [P] = n$.
\end{corollary}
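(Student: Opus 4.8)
The plan is to derive both assertions directly from the two-sided estimate of Theorem~\ref{thm:complexity_bounds}, with a small bookkeeping step needed for the ``in particular'' clause. For the first assertion I would simply substitute the hypothesis $d = b$ into the chain $n - (d - b) \leq \operatorname{cv}[P] \leq n$. Since then $d - b = 0$, this collapses to $n \leq \operatorname{cv}[P] \leq n$, which forces $\operatorname{cv}[P] = n$. Nothing further is required here.

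For the ``in particular'' clause I would argue that a single $2$-component already forces $d = b$, so that the first assertion applies. Concretely, if $P$ has exactly one $2$-component then $d = 1$; Lemma~\ref{lemma:d>=b} gives $b \leq d = 1$, while the fact that $W(P)$ always has nonempty boundary gives $b \geq 1$. Hence $b = 1 = d$, and the first part of the corollary yields $\operatorname{cv}[P] = n$. (If one wishes to allow the degenerate case $n = 1$, which is excluded by the standing hypotheses of Theorem~\ref{thm:complexity_bounds}, it is covered separately by Lemma~\ref{lemma:elementary_properties}(2), which gives $\operatorname{cv}[P] = 1 = n$.)

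Since the whole argument is a one-line consequence of Theorem~\ref{thm:complexity_bounds} together with Lemma~\ref{lemma:d>=b}, I do not expect any genuine obstacle. The only point that needs a moment's care is remembering that $W(P)$ is by construction a $3$-manifold with \emph{nonempty} boundary, so that $b \geq 1$; this is precisely what pins down $b = d$ when $d = 1$ and thereby reduces the second statement to the first.
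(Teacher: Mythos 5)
Your proof is correct and is exactly the argument the paper intends (the corollary is stated without proof as an immediate consequence of Theorem~\ref{thm:complexity_bounds}): substituting $d=b$ collapses the two-sided bound, and for the one-component case Lemma~\ref{lemma:d>=b} together with $b\geq 1$ pins down $b=d=1$. Nothing is missing.
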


 The following theorem allows us to calculate the complexity of virtual manifolds presented by special polyhedra with two $2$-components. Recall that the boundary curve of a $2$-component   is said to be short if it passes through three true vertices   and through each of them exactly once.

\begin{theorem}
 \label{thm:main}
 Let $P$ be a special polyhedron having $n \geq 2$ true vertices and two $2$-components whose boundary curves are not short. Then   $\operatorname{cv} [P] = n$.
\end{theorem}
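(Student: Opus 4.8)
The plan is to first clear away the easy cases, then set up a contradiction in the remaining case and play off an estimate of the $\varepsilon$-invariant computed from two different polyhedra.

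Since $P$ has exactly $d=2$ two-components, Theorem~\ref{thm:complexity_bounds} gives $n-(2-b)\le\operatorname{cv}[P]\le n$, where $b\in\{1,2\}$ is the number of boundary components of $W(P)$. If $b=2$, then $d=b$ and Corollary~\ref{cor1} yields $\operatorname{cv}[P]=n$; if $n=2$, then Lemma~\ref{lemma:elementary_properties}(3) gives $\operatorname{cv}[P]=2=n$. So I would assume from here on that $b=1$ and $n\ge 3$; then $\operatorname{cv}[P]\in\{n-1,n\}$, and it remains only to rule out the value $n-1$. Suppose, for a contradiction, that $\operatorname{cv}[P]=n-1$, and pick $P'\in[P]$ with $n-1$ true vertices. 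Since the moves $T^{\pm1}$ preserve the Euler characteristic and the singular graphs are $4$-regular, $\chi(P')=\chi(P)=n-2n+2=2-n$; as $P'$ is special with $n-1$ true vertices, its number of $2$-components equals $1$. Thus $[P]$ contains a special polyhedron with exactly one $2$-component and $n-1\ge 2$ true vertices. (If both boundary curves of $P$ have non-trivial normal bundle, this already contradicts Lemma~\ref{lemma:number_of_nontrivial}; so the substantive case is the one in which at least one boundary curve of $P$ has trivial normal bundle.)

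Because $\varphi$ is well defined (Theorem~\ref{thm:surjection}), $W(P')\cong W(P)$, and by Theorem~\ref{thm:TVextension} Turaev--Viro invariants are defined on virtual manifolds; I would work with the $\varepsilon$-invariant, i.e.\ the Turaev--Viro state sum at a suitably chosen root of unity, and estimate it for $W(P)$ in two ways. From the state sum built on $P'$: since $P'$ has a single $2$-component, \emph{every} tetrahedral symbol occurring there has all six entries equal to the colour of that one $2$-component, so summing over colours bounds the modulus of the $\varepsilon$-invariant above by $C\lambda^{\,n-1}$, where $\lambda$ is the largest modulus of a tetrahedral symbol with all entries equal and $C$ depends only on the root of unity. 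From the state sum built on $P$: one restricts, to begin with, to the colouring assigning to both $2$-components the colour realising $\lambda$, which makes every tetrahedral symbol the all-equal symbol and contributes a term of modulus $\sim\lambda^{\,n}$, and then shows that the remaining colourings do not cancel it, so that the combined contribution is bounded below by $C'\lambda^{\,n}$ with $C'>0$. Comparing gives $C'\lambda^{\,n}\le C\lambda^{\,n-1}$, and choosing the root of unity so that $\lambda>C/C'$ (uniformly for $n\ge 3$) is a contradiction; hence $\operatorname{cv}[P]=n$.

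The hypothesis that neither boundary curve of $P$ is short enters exactly in the lower bound on the $P$-side: a short boundary curve meets only three true vertices, each once, so its colour enters each of those three tetrahedral symbols only once, producing degeneracies — and, when that curve has non-trivial normal bundle, outright vanishing — that would destroy the dominant contribution. (This is consistent with Remark~\ref{remark:adm_T^{-1}}: a short boundary curve with trivial normal bundle would literally permit a $T^{-1}$ move, forcing $\operatorname{cv}[P]\le n-1$.) The main obstacle — which I expect to occupy most of Section~\ref{section4} — is precisely this lower bound: the Turaev--Viro state sum is an alternating sum of products of real tetrahedral symbols, so one must control the cancellation among colourings near the maximal one and convert ``boundary curves not short'' into a quantitative non-degeneracy statement. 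The reductions above and the one-$2$-component upper bound are routine by comparison.
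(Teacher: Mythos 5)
Your reductions (the $n=2$ and $b=2$ cases, the Euler-characteristic count showing that a hypothetical $P'\in[P]$ with $n-1$ true vertices has exactly one $2$-component, and the quick disposal of the case where both normal bundles are non-trivial via Lemma~\ref{lemma:number_of_nontrivial}) all match the paper and are fine. The gap is in the core step, which you leave as a programme rather than a proof: you assert that the state sum computed from $P$ is bounded below by $C'\lambda^{\,n}$ because ``the remaining colourings do not cancel'' the maximal one, and that a root of unity can be chosen with $\lambda>C/C'$ uniformly in $n$. Neither claim is established, and neither is plausible as stated. The state sum is an alternating sum of products of $6j$-symbols, several of which are negative already at $r=7$ (e.g.\ $\left|\begin{smallmatrix}2&2&2\\2&2&4\end{smallmatrix}\right|$), and to control the cancellation you would need to know, for each mixed colouring, the exact exponents with which each symbol occurs --- that is, how many true vertices and edges each boundary curve of $P$ traverses and with what multiplicities. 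That combinatorial information is not free: the paper spends Lemmas~\ref{lemma41}--\ref{lemma:boundary_curve} extracting it, first using the $\varepsilon$-invariant (which, note, is not a generic state sum at a root of unity but the specific sum $t(R)=\sum_{Q\in\mathcal F(R)}w_{\varepsilon}(Q)$ over simple subpolyhedra) to show that $P$ has a unique proper simple subpolyhedron $Q$ with $V(Q)=n-3$, whence $\partial\beta$ meets exactly three true vertices, each at most twice, and at least one exactly twice because $\partial\beta$ is not short. Your sketch never obtains this structure, so the ``quantitative non-degeneracy'' you defer to cannot even be formulated.

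Moreover, the asymptotic comparison is the wrong shape for this problem. The paper does not estimate anything: it applies a $T$-move to $P'$ to obtain $P''$ with the same numbers of true vertices and $2$-components as $P$, observes that at $r=7$ both $P$ and $P''$ have exactly the four admissible colourings $(0,0),(2,2),(2,0),(2,4)$, computes all weights in closed form, and finds that three of the four agree while the $(2,4)$-weights differ precisely because $m\geq 1$ vertices are traversed twice by $\partial\beta$. An exact finite computation at a single fixed $r$ replaces your hoped-for inequality $\lambda>C/C'$, for which you give no evidence (and which is doubtful on its face, since $C$ grows with the number of colours while the $6j$-symbols remain bounded). As written, the proposal is a plan whose decisive step is missing.
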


 The proof of Theorem~\ref{thm:main} occupies Section~\ref{section4} below.

\begin{lemma}
 \label{lemma:complexity_properties}
 Let $P$ be a special polyhedron with $n \geq 2$ true vertices and two $2$-components. Suppose that the boundary curve of at least one of the $2$-components is short. Then:
\begin{enumerate}
 \item If the short boundary curve has a trivial normal bundle, then $\operatorname{cv}[P] = n-1$.
 \item If both boundary curves have non-trivial normal bundles, then $\operatorname{cv}[P] = n$.
\end{enumerate}
\end{lemma}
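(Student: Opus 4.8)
The plan is to obtain both parts as quick consequences of the material already developed: the admissibility criterion for $T^{-1}$ (Remark \ref{remark:adm_T^{-1}}), the complexity computation for polyhedra with one $2$-component (Corollary \ref{cor1}), and the invariance of the Euler characteristic together with the number of $2$-components having non-trivial normal bundle (Lemma \ref{lemma:number_of_nontrivial}). No new geometric construction should be required.

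For part (1), let $\xi$ be a $2$-component of $P$ whose boundary curve $\partial\xi$ is short and has a trivial normal bundle. Since $\partial\xi$ passes through three true vertices and through each of them only once, these three vertices are distinct, so $n\geq 3$. By Remark \ref{remark:adm_T^{-1}} the polyhedron $P$ admits a move $T^{-1}$; I would apply it and denote the result by $P'$. Then $P'$ is a special polyhedron with $n-1\geq 2$ true vertices and, since $T^{-1}$ decreases the number of $2$-components by $1$, with exactly one $2$-component. By Definition \ref{def:virt_manifold} we have $[P']=[P]$, and Corollary \ref{cor1} applied to $P'$ gives $\operatorname{cv}[P']=n-1$; hence $\operatorname{cv}[P]=\operatorname{cv}[P']=n-1$.

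For part (2), the bound $\operatorname{cv}[P]\leq n$ is immediate from the definition, as $P$ itself has $n$ true vertices. For the reverse bound I would take an arbitrary special polyhedron $Q$ equivalent to $P$, say with $m$ true vertices and $f$ $2$-components. Since the singular graph of a special polyhedron is $4$-regular, $\chi(Q)=f-m$ and $\chi(P)=2-n$; these are equal by Lemma \ref{lemma:number_of_nontrivial}, so $m=f+n-2$. The same lemma shows that $Q$ has exactly two $2$-components with non-trivial normal bundle (as does $P$, both of whose $2$-components are of this type by hypothesis), hence $f\geq 2$ and therefore $m\geq n$. Thus every member of $[P]$ has at least $n$ true vertices, i.e. $\operatorname{cv}[P]\geq n$, and the two inequalities give $\operatorname{cv}[P]=n$.

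Since all the substantive work has been isolated in the preceding statements, I do not anticipate any genuine obstacle; the only point that needs attention is checking that the hypotheses of Corollary \ref{cor1} hold for $P'$ in part (1), namely that $P'$ has at least two true vertices, which is guaranteed because a short boundary curve forces $n\geq 3$ (and if $n=2$ there is no short boundary curve, so that case of the lemma is vacuous).
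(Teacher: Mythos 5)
Your proof is correct and follows essentially the same route as the paper: part (1) via the $T^{-1}$ move and Corollary~\ref{cor1}, and part (2) via the invariance of the Euler characteristic and of the number of $2$-components with non-trivial normal bundle (Lemma~\ref{lemma:number_of_nontrivial}). The only cosmetic difference is in part (2), where the paper first establishes $n-1\le\operatorname{cv}[P]\le n$ using Theorem~\ref{thm:complexity_bounds} and then rules out $n-1$ by contradiction, whereas you argue directly that every polyhedron in $[P]$ has at least $n$ true vertices; the key mechanism is identical.
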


\begin{proof}
 1. By Remark \ref{remark:adm_T^{-1}},   we can apply  to~$P$ the move $T^{-1}$. This gives a special polyhedron~$P'$  equivalent to $P$. Therefore, $\operatorname{cv} [P] = \operatorname{cv} [P']$. By the construction, the polyhedron $P'$ has $(n-1)$ true vertices and one $2$-component. By Corollary~\ref{cor1}, we have $\operatorname{cv} [P'] = n-1$.

 2.  Since $P$ has two $2$-components, Lemma~\ref{lemma:d>=b} implies that the number of   boundary components of the manifold $W(P)$ is   either $1$ or $2$. Then by Theorem~\ref{thm:complexity_bounds}, we have  $n-1 \leq \operatorname{cv} [P] \leq n$. We check now that $\operatorname{cv} [P] \neq n-1$.

Suppose, on the contrary, that $\operatorname{cv} [P] = n-1$, i.e. that $P$ is equivalent to a special polyhedron $P'$ with $n-1$ true vertices. Since all polyhedra in the same equivalence class have the same Euler characteristic,   $\chi(P') = \chi(P) = 2 - n$. Therefore, the polyhedron $P'$ has exactly one $2$-component.

If both boundary curves of $P$ have non-trivial normal bundles, then   Lemma \ref{lemma:number_of_nontrivial} implies that $P'$ has  two $2$-components whose boundary curves have non-trivial normal bundles. This contradicts the fact that $P'$ has only one $2$-component. This completes the proof.
\end{proof}

\begin{remark}
 Let $P$ be a special polyhedron with $n \geq 2$ true vertices and two $2$-components. Using simple combinatorial arguments it is easy to show that both boundary curves of the $2$-components of $P$ cannot be short.
\end{remark}

\section{Proof of theorem~\ref{thm:main}} \label{section4}
 \label{section4}

 First,  note that if $n=2$ then $\operatorname{cv}[P]=2$ by Claim (3) of Lemma \ref{lemma:elementary_properties}. Therefore we will assume that $ n\geq 3$.

An argument analogous to the one in the proof of  Lemma \ref{lemma:complexity_properties}, Claim (2)   proves the inequalities $n-1 \leq \operatorname{cv} [P] \leq n$. It remains to show that $\operatorname{cv} [P] \neq n-1$.

 Suppose, on the contrary,  that $\operatorname{cv} [P] = n-1$, i.e., that in the class $[P]$ there is a special polyhedron $P'$ with $n-1$ true vertices and, hence, with one $2$-component.

 A simple polyhedron contained in a special polyhedron is   \emph{proper}, if it is different from the empty set and from the whole polyhedron. The following lemma describes all proper simple subpolyhedra of~$P$.

\begin{lemma} \label{lemma41}
 The polyhedron $P$ has exactly one proper simple subpolyhedron.
\end{lemma}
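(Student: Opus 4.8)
The plan is to identify the proper simple subpolyhedra of $P$ explicitly and then decide which of them is genuinely simple.

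Write $\xi_1,\xi_2$ for the two $2$-components of $P$ and $\overline{\xi_i}$ for the closure of $\xi_i$. Since $P$ is special, the closure of a $2$-component contains every edge and true vertex of $P$ accumulated by it, so $P=\overline{\xi_1}\cup\overline{\xi_2}$, while $\overline{\xi_1}\cap\xi_2=\overline{\xi_2}\cap\xi_1=\emptyset$. First I would show that every proper simple subpolyhedron $Q\subset P$ equals $\overline{\xi_1}$ or $\overline{\xi_2}$. A simple polyhedron is two-dimensional and the singular graph of $P$ is one-dimensional, so $Q$ contains a point $x$ lying in some $\xi_i$ and having a two-dimensional neighbourhood in $Q$; as $Q$ lies inside the surface $\xi_i$ near $x$, the point $x$ is nonsingular in $Q$, hence $Q\cap\xi_i$ is open in $\xi_i$, and being closed it equals $\xi_i$, so $\overline{\xi_i}\subseteq Q$. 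If $Q$ contained both $\xi_1$ and $\xi_2$ then $Q\supseteq\overline{\xi_1}\cup\overline{\xi_2}=P$, contradicting properness; so $Q$ contains exactly one $\xi_i$, and since $\xi_j\cap\overline{\xi_i}=\emptyset$ for $j\ne i$ we get $Q\subseteq P\setminus\xi_j=\overline{\xi_i}$, i.e. $Q=\overline{\xi_i}$. Thus $P$ has at most two proper simple subpolyhedra.

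Next I would establish the criterion: $\overline{\xi_i}$ is a simple polyhedron if and only if the boundary curve $\partial\xi_i$ runs along no edge of $P$ exactly once --- equivalently, every edge of $P$ contained in $\overline{\xi_i}$ carries at least two of its three sheets in $\xi_i$. The ``only if'' direction is clear, since an edge carrying a single sheet of $\xi_i$ is a free edge of $\overline{\xi_i}$, at whose interior points the link is an arc. For ``if'' one checks links: interior points of $\xi_i$ have circle links; an interior point of an edge carrying two or three sheets of $\xi_i$ has a circle or a theta-graph as link; and at a true vertex $v\in\overline{\xi_i}$ the link of $v$ in $\overline{\xi_i}$ is the subgraph $L$ of the $K_4$-link of $v$ spanned by the corners of $\xi_i$ at $v$. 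Under the hypothesis no edge at $v$ carries a single sheet of $\xi_i$, so every vertex occurring in $L$ has degree $2$ or $3$; a short case check shows the only such subgraphs of $K_4$ with no isolated vertices are a triangle, a quadrilateral, a theta-graph, and $K_4$ itself, all of which are admissible links. Hence $\overline{\xi_i}$ is simple.

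By this criterion together with the first step, the lemma is equivalent to: exactly one of $\partial\xi_1,\partial\xi_2$ runs along some edge of $P$ exactly once. They cannot both fail to: otherwise every edge of $P$ would carry all three sheets in a single $2$-component, and then around any true vertex all four incident edges would lie in the same $2$-component (any two of them share a corner, which forces them to carry the same component); as the singular graph of $P$ is connected, all edges of $P$ would lie in one $2$-component, contradicting that $\partial\xi_2$ is a nonempty closed curve in the singular graph. The remaining possibility --- that $\partial\xi_1$ and $\partial\xi_2$ both run along some edge exactly once --- is the one I expect to be the main obstacle. I would rule it out by a local analysis at the true vertices lying on such edges: classifying which subgraphs $L\subset K_4$ can arise there as the $\xi_1$-link (for instance, a vertex at which both $L$ and $K_4\setminus L$ contain a degree-one vertex ought to be impossible) and then propagating the resulting constraints along the singular graph, using crucially that each $\partial\xi_i$ is a single closed curve, so that $\overline{\xi_i}$ is the image of a single disc. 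Pushing this combinatorial analysis through to a contradiction is where the bulk of the work will lie.
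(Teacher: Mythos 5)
Your reduction of the problem to the two closures $\overline{\xi_1},\overline{\xi_2}$, and your argument that at most one of them can be a simple polyhedron (via the edge-type criterion and connectivity of the singular graph), are sound and essentially match the uniqueness half of the paper's proof. The genuine gap is in the existence half: you must rule out the case where \emph{both} $\partial\xi_1$ and $\partial\xi_2$ run along some edge exactly once, i.e.\ where $P$ has \emph{no} proper simple subpolyhedron, and you defer this to a "local analysis at the true vertices" that you acknowledge is the bulk of the work. That plan cannot succeed, because the statement you are trying to prove there is false as a purely combinatorial assertion about special polyhedra with two $2$-components and non-short boundary curves: generically such a polyhedron has edges of both types $(2,1)$ and $(1,2)$ and hence no proper simple subpolyhedron at all (the standard two-vertex, two-component spine dual to the two-tetrahedron triangulation of the figure-eight knot complement already exhibits this, and nothing prevents it for $n\geq 3$). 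No amount of link-classification in $K_4$ and propagation along the singular graph will contradict it.

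What you are missing is that Lemma~\ref{lemma41} is not a standalone statement about $P$: it sits inside the reductio of Section~\ref{section4}, under the standing hypothesis that $P$ is $T^{\pm1}$-equivalent to a polyhedron $P'$ with $n-1$ true vertices and a single $2$-component. Your proposal never uses $P'$, which is the tell-tale sign. The paper gets existence precisely from this hypothesis via the $\varepsilon$-invariant $t(\cdot)$ (the homologically trivial part of the order-$5$ Turaev--Viro invariant), which is a sum of weights over \emph{all} simple subpolyhedra and is preserved by $T^{\pm1}$: if $P$ had no proper simple subpolyhedron then $t(P)=(-1)^{n}\varepsilon^{2-2n}+1$ while $t(P')=(-1)^{n-1}\varepsilon^{3-2n}+1$, and these differ, contradicting $P'\in[P]$. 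To repair your argument you need to import some such invariant-theoretic input; the combinatorial route alone is a dead end.
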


\begin{proof}
 As mentioned above, the Turaev--Viro invariants  extend to
  virtual manifolds. To prove the lemma, we use one of the resulting invariants, known as the $\varepsilon $-invariant, which is the homologically trivial part of the order $5$ Turaev--Viro invariant. We give the definition of the $\varepsilon $-invariant following~\cite{MatBook}. Consider a special polyhedron $R$ and let $\mathcal{F}(R)$ be the set of all   simple subpolyhedra of $R$ including $R$ and the empty set. Set $\varepsilon = (1+\sqrt{5})/2$,  a solution of the equation $\varepsilon^2=\varepsilon+1$. We associate to each $Q\in \mathcal{F}(R)$ its \emph{$\varepsilon$-weight} by the formula
$$
w_{\varepsilon}(Q) = (-1)^{V(Q)}\varepsilon^{\chi(Q)-V(Q)},
$$
where $V(Q)$ is the number of   true vertices of the  simple polyhedron  $Q$ and $\chi $ is the Euler characteristic. Set
$$
t(R) = \sum_{Q\in \mathcal{F} (R)} w_{\varepsilon}(Q).
$$
As shown in~\cite{MatBook}, the number $t(R)$ is invariant under the moves $T^{\pm1}$.
We   call $t(R)$ the $\varepsilon$-invariant of the virtual manifold $[R]$ and denote it by $t[R]$.

We return to the proof of the lemma and use the $\varepsilon$-invariant to show that $P$ has at least one proper simple subpolyhedron. If it is not the case, then $$t(P) = (-1)^{n} \varepsilon^{2-2n} + 1 \quad {\text {whereas}} \quad
    t(P') = (-1)^{n-1} \varepsilon^{3- 2n}  + 1 .
$$
Since $\varepsilon = (1+\sqrt{5})/2$, the values $t(P)$ and $t(P')$ differ. This contradicts the fact that $P'\in [P]$. Thus, $P$ has at least one proper simple polyhedron.

 We will show that such a subpolyhedron is unique. By the compactness of a simple subpolyhedron if it contains a  point of a $2$-component, then it contains the $2$-component entirely. Thus, to describe a simple subpolyhedron of $P$ it is enough to indicate which
$2$-components of $P$ it includes (its triple points
and   true vertices will be then determined uniquely). Since~$P$ has only two $2$-components,   it has at most two proper simple subpolyhedra, and each of them contains exactly one $2$-component. Since   $P$ is connected, it has an edge traversed twice by the boundary curve of one of the $2$-components  and traversed once by the boundary curve   of the other $2$-component. The latter $2$-component cannot be contained in a proper simple subpolyhedron of $P$. This completes the proof.
\end{proof}

 Denote by $Q$ the proper simple subpolyhedron of $P$ whose existence was proved in Lemma~\ref{lemma41}. Let $V(Q)$ be the number of true vertices of $Q$.

\begin{lemma} \label{lemma42}
We have $V(Q) = n-3$.
\end{lemma}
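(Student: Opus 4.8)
The plan is to use the $\varepsilon$-invariant computation once more, exactly as in the proof of Lemma~\ref{lemma41}, but now exploiting the fact that $P$ has a \emph{unique} proper simple subpolyhedron $Q$. Since $\mathcal{F}(P) = \{\emptyset, Q, P\}$ and $\mathcal{F}(P') = \{\emptyset, P'\}$ (the polyhedron $P'$ has only one $2$-component, hence no proper simple subpolyhedron), the invariance of $t$ under the moves $T^{\pm 1}$ gives the equation
\begin{equation*}
 (-1)^{n}\varepsilon^{\chi(P)-n} + (-1)^{V(Q)}\varepsilon^{\chi(Q)-V(Q)} + 1
 = (-1)^{n-1}\varepsilon^{\chi(P')-(n-1)} + 1 .
\end{equation*}
First I would substitute the known Euler characteristics: $\chi(P) = 2-n$ (since $P$ has two $2$-components and $n$ true vertices, $\chi(P) = n - 2n + 2 = 2-n$), and similarly $\chi(P') = 1 - (n-1) + \text{(one 2-component)} = 2-n$ as well, consistent with $\chi$ being an invariant. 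This reduces the identity to
\begin{equation*}
 (-1)^{V(Q)}\varepsilon^{\chi(Q)-V(Q)} = (-1)^{n-1}\varepsilon^{3-2n} - (-1)^{n}\varepsilon^{2-2n}
 = (-1)^{n-1}\varepsilon^{2-2n}(\varepsilon + 1) = (-1)^{n-1}\varepsilon^{4-2n},
\end{equation*}
using $\varepsilon + 1 = \varepsilon^2$. Since $\varepsilon > 1$ is irrational and both sides are $\pm$ a power of $\varepsilon$, comparing signs and exponents forces $V(Q) \equiv n-1 \pmod 2$ and $\chi(Q) - V(Q) = 4 - 2n$.

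Next I would pin down $\chi(Q)$ independently. Because $Q$ is the proper simple subpolyhedron containing exactly one $2$-component $\eta$ of $P$ (the one whose boundary curve traverses every edge of its singular graph an even number of times, as identified at the end of Lemma~\ref{lemma41}), $Q$ is itself a special polyhedron whose singular graph is a $4$-regular subgraph of that of $P$; in particular $Q$ has one $2$-component, say $V(Q)$ true vertices, and $2V(Q)$ edges, so $\chi(Q) = V(Q) - 2V(Q) + 1 = 1 - V(Q)$. Substituting into $\chi(Q) - V(Q) = 4 - 2n$ gives $1 - 2V(Q) = 4 - 2n$, hence $V(Q) = n - 3/2$, which is absurd unless — and here I must be careful — $Q$ need not be \emph{special}; it is only guaranteed to be \emph{simple}, so its singular graph may have free $1$-dimensional strata or $2$-components that are not cells, and the Euler-characteristic bookkeeping changes. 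So instead of assuming $Q$ is special I would argue directly from $\chi(Q) - V(Q) = 4 - 2n$ together with the relation $\chi(P) - n = 2 - 2n$, i.e. $\chi(P) - n = (\chi(Q) - V(Q)) + 2$, which is exactly what the $\varepsilon$-invariant equation delivered; combined with the parity $V(Q) \equiv n-1 \pmod 2$ this still leaves a one-parameter family, so the $\varepsilon$-invariant alone does not finish the job.

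The main obstacle, therefore, is to upgrade the modular/exponent information into the exact equality $V(Q) = n-3$, and I expect this to require a \emph{combinatorial} argument about how the singular graph of $P$ decomposes once we remove the $2$-component $\zeta$ \emph{not} contained in $Q$. Concretely: $Q$ is obtained from $P$ by deleting the open $2$-cell $\zeta$ and then deleting every edge and true vertex that no longer lies on the frontier of any surviving $2$-component. The true vertices of $P$ that disappear are precisely those lying on $\partial\zeta$, and since $\partial\zeta$ is \emph{not short} and $P$ is connected with two $2$-cells, I would show $\partial\zeta$ must pass through exactly three true vertices counted \emph{with multiplicity} in such a way that exactly three distinct true vertices fail to survive — equivalently, that the "bad" $2$-component contributes exactly $3$ to the vertex count. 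This is where the hypothesis that the boundary curve is not short is used in the opposite direction from Lemma~\ref{lemma:complexity_properties}: not to \emph{forbid} a $T^{-1}$ move, but to constrain the local picture near $\partial\zeta$ enough that the frontier of $\zeta$ in $P$ detaches cleanly, removing three vertices. Once $V(Q) = n-3$ is established combinatorially, it is consistent with the parity $n-3 \equiv n-1 \pmod 2$ and with $\chi(Q) - V(Q) = 4-2n$ (giving $\chi(Q) = 1 - n + ( \text{edges of } Q ) \cdots$), so the two approaches corroborate each other; but I anticipate the honest content of Lemma~\ref{lemma42} lives entirely in this frontier-analysis step, and the subsequent sections presumably derive the contradiction with $P'$ from the resulting rigid structure of $Q$.
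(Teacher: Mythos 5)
Your first half is exactly the paper's: the identity $t(P)=t(P')$ for the $\varepsilon$-invariant reduces to the parity condition $V(Q)\equiv n-1 \pmod 2$ together with $\chi(Q)-V(Q)=4-2n$, and you are also right to catch that $Q$ is only simple, not special, so you cannot write $\chi(Q)=1-V(Q)$. But from there the proposal has a genuine gap: you conclude that the $\varepsilon$-invariant ``does not finish the job'' and defer to an unexecuted frontier analysis that would show removing $\beta$ kills exactly three true vertices of $P$. That is not how the argument closes, and your proposed route is both uncarried-out and backwards: the fact that $\partial\beta$ meets exactly three true vertices is a \emph{consequence} of $V(Q)=n-3$ (it is Lemma~\ref{lemma:boundary_curve}), not an input one can establish independently --- nothing in the hypotheses forces it a priori, and the ``not short'' assumption is not used in this lemma at all (it enters only in the order-$7$ Turaev--Viro computation of Lemma~\ref{lemma44}).

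What actually finishes the proof is much lighter combinatorics than you anticipate. First, $V(Q)=n-1$ is excluded: it would mean $\partial\beta$ meets a single true vertex and a single edge, so $Q=P\setminus\beta$ and $\chi(Q)=\chi(P)-1=1-n$, which is incompatible with $\chi(Q)-V(Q)=4-2n$. Second, the natural cell decomposition of $P$ induces one on $Q$ with $k_0\le n$ zero-cells, $k_1\le 2k_0$ one-cells and exactly one two-cell (namely $\alpha$), so $\chi(Q)=k_0-k_1+1\ge 1-k_0\ge 1-n$; substituting into $\chi(Q)=V(Q)+4-2n$ gives $V(Q)\ge n-3$. Together with $V(Q)<n$, the parity constraint, and the exclusion of $V(Q)=n-1$, this forces $V(Q)=n-3$. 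You had every ingredient for the lower bound $\chi(Q)\ge 1-n$ in hand --- it needs only that $Q$ contains exactly one $2$-component and that its cells come from those of $P$ --- and supplying that bound, rather than a structural analysis of how $\beta$ detaches, is the missing step.
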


\begin{proof}
 Since $Q$ is a proper simple subpolyhedron of $P$, we have $V(Q)<n$. Clearly,
$$
t[P] = t(P) = (-1)^{n} \varepsilon^{2- 2n} + (-1)^{V(Q)} \varepsilon^{\chi(Q) - V(Q)} + 1.
$$
As in the proof of Lemma~\ref{lemma41}, the calculation of the same $\varepsilon$-invariant $t[P]$   on the polyhedron $P'$ gives $t[P] = t(P') = (-1)^{n-1} \varepsilon^{3- 2n}  + 1$. It is easy to see that $t(P) = t(P')$ if and only if
$$
(-1)^{V(Q)} \varepsilon^{\chi(Q) - V(Q)} = (-1)^{n-1} \left[ \varepsilon^{3-2n} + \varepsilon^{2-2n} \right].
$$
This equality holds if and only if the following conditions hold:
\begin{itemize}
\item[(i)] $V(Q)$ and $n-1$ have the same parity; \\
\item[(ii)] $\varepsilon^{\chi(Q) - V(Q)} = \varepsilon^{3-2n} + \varepsilon^{2-2n}$.
\end{itemize}

Since
\begin{equation*} \label{eqn200}
\varepsilon^{3-2n} + \varepsilon^{2-2n} = \varepsilon^{2-2n}  (\varepsilon + 1) = \varepsilon^{2-2n} \varepsilon^{2} = \varepsilon^{4-2n},
\end{equation*}
condition (ii) is equivalent to
\begin{equation} \label{eqn1}
\chi(Q) + 2n = 4 + V(Q).
\end{equation}

 Now we show that $V(Q) \neq n-1$. Denote by $\alpha$ and $\beta$ the $2$-components of $P$ assuming for concreteness that $\alpha$ is contained in $Q$, and $\beta$ is not. Suppose, on the contrary, that $V(Q) = n-1$. Then the boundary curve $\partial \beta$ passes through only one true vertex of  $P$, and hence through one edge of $P$ (see  Figure~\ref{fig2}).
\begin{figure}[h]
\begin{center}
\includegraphics[scale=0.7]{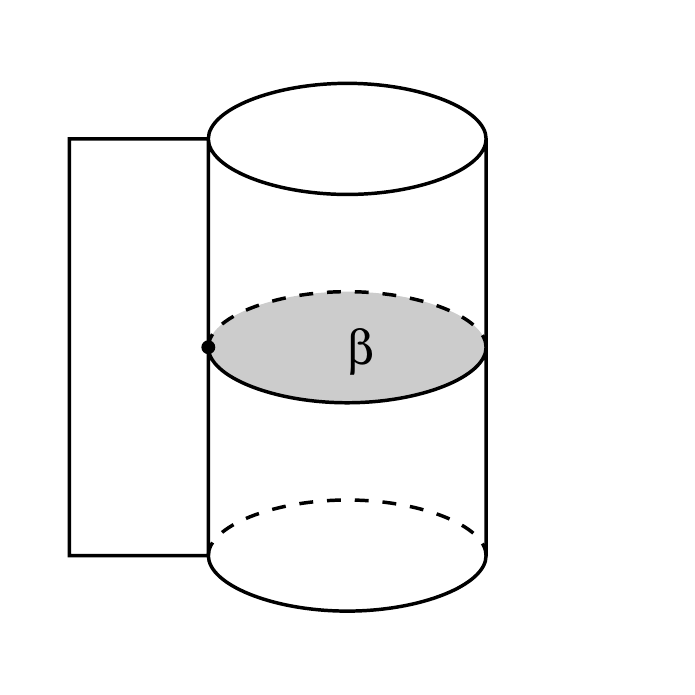}
\end{center}
\caption{The boundary curve $\partial \beta$ passes through one true vertex and through one edge of the special polyhedron} \label{fig2}
\end{figure}
Hence, $Q = P \setminus \beta$. Therefore $\chi(Q) = \chi(P) - 1= 1-n$. Obviously, for such values of $V(Q)$ and $\chi(Q)$ the equality (\ref{eqn1}) is not met. This contradiction implies that $V(Q) \neq n-1$.

 Recall that the special polyhedron $P$ has a natural cell decomposition: a $2$-cell is a connected component of the set of nonsingular
points, a $1$-cell is an edge, and a $0$-cell is a true vertex. The cell decomposition of $P$ induces a cell decomposition of its simple subpolyhedron~$Q$. Denote by $k_{0}$ the number of $0$-cells of $Q$  and by $k_{1}$ the number of  $1$-cells of $Q$. Hence, $k_{0} \leq n$ and $k_{1} \leq 2 k_{0}$. Since $Q$ has exactly one $2$-cell (that is the $2$-component $\alpha$),   $$\chi(Q) = k_{0} - k_{1} + 1 \geq 1 - k_{0 }\geq 1 - n.$$ Hence, using the formula (\ref{eqn1}), we obtain that $V(Q) \geq n - 3$. Taking into account that $V(Q)$ has the same parity as $n-1$  and that $V(Q) \neq n-1$, we can conclude that $V(Q)= n-3$. The proof is complete.
\end{proof}

 Recall that by the choice of notation  in the proof of Lemma~\ref{lemma42}, the $2$-component $\beta$ of the polyhedron $P$ is not contained in the simple subpolyhedron $Q$.

\begin{lemma} \label{lemma:boundary_curve}
 The boundary curve $\partial \beta$ of the $2$-component $\beta$ of the polyhedron $P$ has the following properties:
\begin{itemize}
\item[(a)] $\partial \beta$ passes along each edge of $P$ at most once;
\item[(b)] $\partial \beta$ passes through three true vertices of $P$ and through each of them at most twice.
\end{itemize}
\end{lemma}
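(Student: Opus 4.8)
The plan is to extract everything from the equality $V(Q)=n-3$ of Lemma~\ref{lemma42}, together with the observation that the proper simple subpolyhedron $Q$ equals the closure $\overline{\alpha}$ of the $2$-component $\alpha$ (recall $\alpha\subset Q$, $\beta\not\subset Q$, so $\alpha$ is the only $2$-component of $Q$, and every point of a simple polyhedron lies in the closure of its $2$-components).

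First I would pin down the cell structure of $Q$. From $V(Q)=n-3$ and the identity (\ref{eqn1}), $\chi(Q)=1-n$. Give $Q$ the cell decomposition induced from that of $P$, and let $c_0$ be the number of its $0$-cells (true vertices of $P$ lying in $Q$) and $c_1$ the number of its $1$-cells (edges of $P$ lying in $Q$). Since $Q$ has exactly one $2$-cell, namely $\alpha$, the relation $\chi(Q)=c_0-c_1+1$ becomes $c_1=c_0+n$. On the other hand $c_0\le n$, and since the singular graph of $P$ is $4$-regular each $0$-cell of $Q$ is incident to at most four of its $1$-cells, so $2c_1\le 4c_0$. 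Combining these gives $c_0+n\le 2c_0$, hence $c_0=n$ and $c_1=2n$: every true vertex and every edge of $P$ lies in $Q=\overline{\alpha}$.

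Next I would deduce (a) and (b). For an edge $e$ of $P$ let $b_e$ be the number of germs of $2$-components along $e$ belonging to $\beta$, which equals the number of times $\partial\beta$ runs along $e$. Since $e\subset\overline{\alpha}$, at least one of the three germs along $e$ is an $\alpha$-germ, so $b_e\le 2$; and if $b_e=2$, the link in $Q$ of an interior point of $e$ is obtained from the graph with two vertices joined by three edges by deleting two of the edges, i.e.\ it is an arc, which is impossible in a simple polyhedron. Hence $b_e\le 1$, which is (a). For (b), the number of passes of $\partial\beta$ through a true vertex $v$ is the number $g_v$ of germs of $\beta$ at $v$; as all four edges at $v$ lie in $Q$, the link of $v$ in $Q$ is $K_4$ with $g_v$ of its edges removed, so $v$ is a true vertex of $Q$ exactly when $g_v=0$. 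Since $Q$ has $V(Q)=n-3$ true vertices, precisely three true vertices of $P$ satisfy $g_v\ge 1$, and these are exactly the ones $\partial\beta$ passes through. Finally, summing $b_e$ over the four edge-ends at such a $v$ counts each $\beta$-germ at $v$ twice, so $2g_v=\sum_{\text{ends at }v}b_e\le 4$ by (a); thus $g_v\le 2$, giving (b).

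The step I expect to be the main obstacle is the combinatorial bookkeeping in the last paragraph: one must compute the link of a point of $Q$ correctly (which requires both $Q=\overline{\alpha}$ and the survival of all edges established in the second paragraph) and apply the dictionaries ``number of traversals of $e$ by $\partial\beta$ $=$ number of $\beta$-germs along $e$'' and ``number of passes of $\partial\beta$ through $v$ $=$ number of $\beta$-germs at $v$'' with the correct multiplicities, paying attention to loop edges at a vertex. Everything else reduces to the Euler-characteristic arithmetic of the second paragraph, which is routine.
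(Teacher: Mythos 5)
Your proof is correct and follows essentially the same route as the paper: use $V(Q)=n-3$ and equation~(\ref{eqn1}) to force $\chi(Q)=1-n$, deduce via the Euler-characteristic/valence count that $Q$ contains all $n$ true vertices and all $2n$ edges of $P$, and then read off (a) and (b) from the link conditions at edges and vertices of the simple polyhedron $Q$. Your write-up is in fact somewhat more explicit than the paper's in the final combinatorial bookkeeping (the germ counts at edges and vertices), which the paper leaves implicit.
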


\begin{proof}
 We show that the simple subpolyhedron $Q\subset P$ contains all true vertices of $P$ (they are not necessarily true vertices in $Q$) and all its edges. That is, in the above notations, we have $k_{0} = n$ and $k_{1} = 2n$. Indeed, if $k_{0} < n$ then $k_{1} < 2n-3$. Hence, $\chi(Q)  = k_{0} - k_{1} +1 > k_{0} - 2n + 4$. Since $V(Q)= n-3$, by the formula (\ref{eqn1}) we have $\chi(Q)  = 1-n$. Therefore, $k_{0} < n-3$, which contradicts the condition $k_{0} \geq V(Q) = n-3$. Thus $k_{0} = n$. Then the equalities $k_{0} - k_{1} + 1 = 1 - n$ and $k_{0}= n$ imply that $k_{1} = 2n$.

Since the   polyhedron $Q$ contains all edges of $P$, the curve $\partial \beta$ passes along each edge of $P$ at most once and, therefore, it passes through each true vertex of $P$ at most twice. Since $V(Q) = n-3$, the curve $\partial \beta$ passes through exactly three true vertices of $P$. The proof is complete.
\end{proof}

 Let us apply a move $T$ to the polyhedron $P'$ in an arbitrary way. Denote the resulting special polyhedron by $P''$. The polyhedron $P''$ has $n$ true vertices, two $2$-components and belongs to the equivalence class~$[P]$.

\begin{lemma} \label{lemma44}
 The virtual $3$-manifolds $[P]$ and $[P'']$ have different Turaev--Viro invariants.
\end{lemma}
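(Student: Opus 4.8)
The plan is that this lemma finishes the proof of Theorem~\ref{thm:main}: since $P''$ is obtained from $P'\in[P]$ by one move $T$, we have $P''\in[P]$, so $[P'']=[P]$; hence as soon as $[P]$ and $[P'']$ are seen to have different Turaev--Viro invariants, Theorem~\ref{thm:TVextension} (well-definedness of Turaev--Viro invariants on virtual manifolds) contradicts the standing assumption $\operatorname{cv}[P]=n-1$, leaving $\operatorname{cv}[P]=n$. So I would compute a suitable Turaev--Viro invariant of the virtual manifold from the polyhedron $P$, using the rigid combinatorial picture established in Lemmas~\ref{lemma41}--\ref{lemma:boundary_curve}, then compute the same invariant from $P''$, and check that the two values disagree.

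First I would note that the order $5$ invariant $t$ is not sharp enough here: by Lemma~\ref{lemma42} and (\ref{eqn1}) the unique proper simple subpolyhedron $Q\subset P$ has $V(Q)=n-3$ and $\chi(Q)-V(Q)=4-2n$, so $w_\varepsilon(Q)=(-1)^{n-1}\varepsilon^{4-2n}$ and $t(P)=t(P')$, and the same bookkeeping forces $t(P'')=t(P')$. Thus one must use a finer Turaev--Viro invariant $TV_r$ (of higher order, or the full order $5$ invariant rather than only its homologically trivial part $\varepsilon$), which by Theorem~\ref{thm:TVextension} is defined on virtual manifolds and is evaluated on a special polyhedron as a state sum over admissible colorings of its $2$-components, with the usual edge and vertex weights of the Turaev--Viro theory. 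The key is that Lemmas~\ref{lemma41}--\ref{lemma:boundary_curve} essentially determine $P$: the $2$-component $\alpha$ fills the whole of $Q$, which carries all $n$ true vertices and all $2n$ edges of $P$ (so $\chi(Q)=1-n$), while $\partial\beta$ runs along each edge at most once and through exactly three true vertices, each at most twice. This reduces $TV_r(P)$ to a normal form depending only on the colorings of $\alpha$, of $\beta$, and of those three vertices. On the other side, $P''$ carries a short $2$-component $\gamma$ with trivial normal bundle, namely the one created by $T$; in the state sum the presence of such a $\gamma$ ties the colors of $\gamma$ and of the three edges it traverses together by the contraction identity underlying the invariance of $TV_r$ under $T^{\pm1}$, which pins $TV_r(P'')$ to a value that, for an appropriate $r$, cannot match the normal form obtained for $TV_r(P)$.

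The hard part will be this last comparison. Because the move $T$ is applied to $P'$ \emph{in an arbitrary way}, the mismatch of $TV_r$ has to be established for every admissible placement of $\gamma$, so I cannot reduce to a single model configuration; equivalently, the crux is to show that the structure forced on $P$ by having two non-short $2$-components, one of which fills the unique proper simple subpolyhedron $Q$, is incompatible at the level of the Turaev--Viro state sum with the structure forced on $P''$ by having a short, trivially framed $2$-component. This is where the explicit weights of the theory, together with the parity of $V(Q)$, the value $\chi(Q)=1-n$, and the relation (\ref{eqn1}) coming from Lemmas~\ref{lemma42}--\ref{lemma:boundary_curve}, all have to be brought to bear.
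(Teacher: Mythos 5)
Your overall strategy coincides with the paper's: pass to a Turaev--Viro invariant finer than the $\varepsilon$-invariant, evaluate it as a state sum on $P$ and on $P''$, and derive a contradiction with Theorem~\ref{thm:TVextension}. Your observation that the order~$5$ homologically trivial part cannot separate $[P]$ from $[P'']$ is also correct and matches the paper's implicit logic. However, there is a genuine gap: you explicitly defer the decisive step (``the hard part will be this last comparison''), and that step is the entire content of the lemma. The paper resolves it by a concrete computation at order $r=7$: with the palette $\{0,\dots,5\}$ both $P$ and $P''$ have exactly the four admissible colorings $(0,0)$, $(2,2)$, $(2,0)$, $(2,4)$; the weights of $(0,0)$, $(2,2)$ and $(2,0)$ coincide for $P$ and $P''$, but the weight of $(2,4)$ does not, because the non-shortness of $\partial\beta$ (via Lemma~\ref{lemma:boundary_curve}) forces $\partial\beta$ to traverse some vertex twice, i.e.\ $m\geq 1$, so $w((2,4),P)$ contains a factor $\left|\begin{smallmatrix} 2 & 2 & 4 \\ 2 & 2 & 4 \end{smallmatrix}\right|^{m}$ in place of $\left|\begin{smallmatrix} 2 & 2 & 2 \\ 2 & 2 & 4 \end{smallmatrix}\right|^{m}$, and these $6j$-symbols are distinct ($([2]!)^2/[5]!$ versus $-([2]!)^2/[4]!$). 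Without choosing a specific $r$ and exhibiting such a discrepancy, nothing has been proved.

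Two further points. First, your worry that the move $T$ is applied ``in an arbitrary way'' and that one must therefore handle every placement of the new $2$-component is dissolved by the computation itself: the state sum depends only on the coloring-theoretic combinatorics of $P''$ (number of true vertices, the two edge types, and the fact that the short curve $\partial\delta$ passes through three vertices once each), and all of this is determined independently of where $T$ is performed. Second, your suggestion to exploit ``the contraction identity underlying the invariance of $TV_r$ under $T^{\pm1}$'' to pin down $TV_r(P'')$ is circular as stated: $TV_r(P'')=TV_r(P')$ holds automatically because $P''$ is obtained from $P'$ by $T$, so no information is gained there; the substance lies entirely in comparing the two explicit state sums for $P$ and $P''$.
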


\begin{proof}
 We will show that these virtual manifolds differ by the order 7 Turaev--Viro invariant.  We recall the construction of Turaev--Viro invariants \cite{TV} following the exposition in \cite{MatBook}  and calculate  the value of the order 7 invariant for $[P]$ and $[P'']$.

 Let $R$ be a special polyhedron, $V(R)$ be the set of its vertices, and $U(R)$ be the set of its $2$-components. A \emph{coloring} of $R$ in the color palette $\mathcal C = \{ 0, 1, \ldots, r-2 \}$, where $r\geq 3$ is an integer, is a map $\zeta : U(R) \to \mathcal C$. An unordered triple $i, j, k$ of colors  from the palette~$\mathcal C$ is   \emph{admissible} if
\begin{itemize}
\item{} $i+j \geq k$, $j + k \geq i$, $k + i \geq j$;
\item{} $i + j + k$ is even;
\item{} $i+j+k \leq 2r - 4$.
\end{itemize}
 A coloring $\zeta$ of a special polyhedron $R$ is   \emph{admissible} if the colors of any three $2$-components adjacent to the same edge form an admissible triple. The set of all admissible colorings of $R$ in the palette $\mathcal C$ will be denoted by $\text{\rm Col} (R)$.

 We describe all admissible colorings of the special polyhedra $P$ and $P''$ when $r=7$. It was established in Lemma \ref{lemma:boundary_curve}, that the boundary curve $\partial \beta$ passes through exactly three true vertices of $P$. Denote by $m$ the number of those vertices traversed by the curve $\partial \beta$ exactly twice. By the conditions of Theorem \ref{thm:main}, the boundary curves of the $2$-components of $P$ are not short, and therefore the   curve $\partial \beta$ traverses twice at least one     true vertex of $P$. Thus, $1 \leq m \leq 3$. As in the proof of Lemma~\ref{lemma42}, we let $\alpha $  denote the second $2$-component of  $P$. We separate all edges of $P$  into two types. An edge is of type I, if only the curve $\partial \alpha$ passes along this edge. An edge is of type II, if the curve $\partial \alpha$ passes along it twice and the curve $\partial \beta$ passes along it once.

 Recall that the special polyhedron $P''$ is obtained by applying the move $T$ to the special polyhedron $P'$. Therefore the polyhedron $P''$, as well as $P$, has two $2$-components, and the boundary curve of one of its $2$-components is short. Denote this $2$-component of  $P''$ by $\delta$, and denote the second $2$-component of  $P''$ by $\gamma$. Similarly to $P$, all edges of $P''$  are separated into two types. An edge is of type I, if only the curve $\partial \gamma$ goes along this edge, and of type II, if the curve $\partial \gamma$ goes along   it twice and the curve $\partial \delta$ goes along it once.

 Let $(k,\ell)$ denote a coloring of $P$ such that the $2$-component $\alpha$ is painted in the color $k$ and the $2$-component $\beta$ in the color $\ell$. Analogously, let $(k,\ell)$ denote a coloring of $P''$ such that the $2$-component $\gamma$ is painted in the color $k$ and the $2$-component $\delta$ in the color $\ell$. A coloring $(k,\ell)$ is called \emph{monochrome} if $k=\ell$. Consider the palette ${\mathcal C} = \{0, 1, 2, 3, 4, 5\}$ for $r=7$. From the definition of an admissible coloring it  follows immediately that each of the polyhedra $P$ and $P''$ has four admissible colorings. More precisely,
$$\text{\rm Col} (P) = \text{\rm Col} (P'')  = \{ (0,0), (2,2), (2,0), (2,4) \}.$$

 Recall relevant notation from the theory of quantum invariants. Let $q$ be a $2r$-th root of unity such that $q^{2}$ is a primitive root of unity of degree $r$. For a non-negative integer $n$ set
 $$[n] = \frac{q^{n}- q^{-n}}{q - q^{-1}} \qquad \text{ and } \qquad [n]! = [n] [n-1] \cdots [2] [1].$$
In particular, $[1]! = [1] = 1$. By definition, $[0]! = 1$.

To each color $i \in \mathcal C$ one assigns a weight
 $$w_{i} = (-1)^{i} [i+1].$$
In particular,
\begin{equation} \label{w_i}
w_{0} = 1, \qquad w_{2} = [3], \qquad w_{4}=[5].
\end{equation}

 For a non-negative integer $m$ we write $\hat{m} = m / 2$. For an admissible triple $i,j,k$ put
$$
\Delta(i,j,k) = \left(  \frac{[\hat{i} + \hat{j} - \hat{k}]! \,  [ \hat{j}  + \hat{k} - \hat{i}]! \,  [ \hat{k} + \hat{i} - \hat{j} ] ! }{  [  \hat{i} + \hat{j} + \hat{k} + 1]! }  \right)^{1/2} .
$$
In particular,
$$
\Delta(0,0,0) = 1, \quad \Delta(2,2,2) = \frac{1}{([4]!)^{1/2}}, \quad
\Delta(2,2,0) = \frac{1}{[3]^{1/2}}, \quad \Delta(2,2,4) = \frac{[2]!}{([5!])^{1/2}}.
$$

     Consider a regular neighborhood $N(v)$ of a true vertex $v$ of a special polyhedron $R$. The intersection of $N(v)$ with the union of   $2$-components of $R$ consists of six discs which, as in \cite{MatBook},   are called \emph{wings} of   $N(v)$. Two wings are \emph{opposite} if   their closures in $N(v)$ meet only in~$v$. It is clear that the   wings of $N(v)$ are divided into three pairs of opposite wings. Note that any coloring of $R$ induces a coloring of the wings of   $N(v)$. If the coloring of $R$ is admissible and triple pairwise opposite wings are painted in the colors $i$ and $l$, $j$ and $m$, $k$ and $n$ (see Figure~\ref{fig: colored_butterfly}), then with the vertex $v$ one associates a   quantum $6j$-symbol $\left| \begin{matrix} i & j & k \\ l & m & n \end{matrix} \right|_{v} \in \mathbb C$.

\begin{figure}[h]
\begin{center}
\includegraphics[scale=1.2]{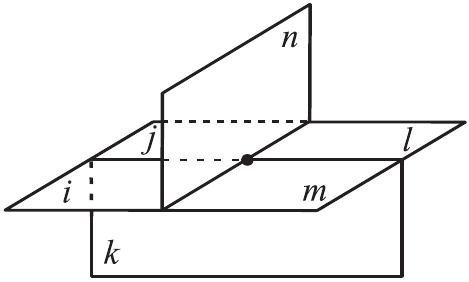}
\end{center}
\caption{A coloring of the wings of a true vertex neighborhood.}
\label{fig: colored_butterfly}
\end{figure}

The $6j$-symbol is defined by the formula
$$
\left| \begin{matrix} i & j & k \\ l & m & n \end{matrix} \right|_{v} = \sum_{z} \frac{ (-1)^{z} \, [z+1]! \, A \left( \begin{matrix} i & j & k \cr l  & m & n \end{matrix} \right) }{ B \left(z,  \begin{pmatrix} i & j & k \cr l & m & n \end{pmatrix} \right) \, C \left( z, \begin{pmatrix} i & j & k \cr l & m & n \end{pmatrix} \right) },
$$
where
$$
A \left( \begin{matrix} i & j & k \cr l & m & n \end{matrix} \right) = (\sqrt{-1})^{(i+j+k+l+m+n)} \, \Delta(i,j,k) \, \Delta(i,m,n) \, \Delta(j,l,n) \, \Delta(k,l,m),
$$
$$
B \left( z, \left( \begin{matrix} i & j & k \cr l & m & n \end{matrix} \right) \right) = [ z - \hat{i} - \hat{j} - \hat{k} ]! \, [z- \hat{i} - \hat{m} - \hat{n} ]! \, [z-\hat{j} - \hat{l} - \hat{n} ]! \, [ z - \hat{k} - \hat{l} - \hat{m} ]! ,
$$
$$
C \left( z, \left( \begin{matrix} i & j & k \cr l & m & n \end{matrix} \right) \right) = [ \hat{i} + \hat{j} + \hat{l} + \hat{m} - z ]! \, [ \hat{i} + \hat{k} + \hat{l} + \hat{n} - z]! \, [ \hat{j} + \hat{k} + \hat{m} +  \hat{n} - z]! ,
$$
and the sum is taken over all integer $z$ such that $a \leq z \leq b$, where
$$
a = \max \{ \hat{i} + \hat{j} + \hat{k}, \hat{i} + \hat{m} + \hat{n}, \hat{j} + \hat{l} + \hat{n}, \hat{k} + \hat{l} + \hat{m} \},
$$
$$
b = \min \{ \hat{i} + \hat{j} + \hat{l} + \hat{m}, \hat{i} + \hat{k} + \hat{l} + \hat{n}, \hat{j} + \hat{k} + \hat{m} + \hat{n} \}.
$$
By these formulas, for admissible colorings of $2$-components of $P$ and $P''$, we obtain:
$$
\begin{gathered}
A \left( \begin{matrix} 2 & 2& 0 \cr 2 & 2 & 0 \end{matrix} \right) = \frac{1}{[3]^{2}}, \quad
B \left( 2,  \left( \begin{matrix} 2 & 2& 0 \cr 2 & 2 & 0 \end{matrix} \right) \right) = 1, \quad
C \left( 2, \left( \begin{matrix} 2 & 2& 0 \cr 2 & 2 & 0 \end{matrix} \right) \right) = [2]!, \\
A \left( \begin{matrix} 2 & 2& 0 \cr 2 & 2 & 2 \end{matrix} \right) = \frac{-1}{[3] [4]!}, \quad
B \left( 3, \left( \begin{matrix} 2 & 2& 0 \cr 2 & 2 & 2 \end{matrix} \right) \right) = 1, \quad
C \left( 3, \left( \begin{matrix} 2 & 2& 0 \cr 2 & 2 & 2 \end{matrix} \right) \right) = 1, \\
A \left( \begin{matrix} 2 & 2& 0 \cr 2 & 2 & 4 \end{matrix} \right) = \frac{([2]!)^{2}}{[3] [5]!}, \quad
B \left( 4, \left( \begin{matrix} 2 & 2& 0 \cr 2 & 2 & 4 \end{matrix} \right) \right) = ([2]!)^{2}, \quad
C \left( 4, \left( \begin{matrix} 2 & 2& 0 \cr 2 & 2 & 4 \end{matrix} \right) \right) = 1, \\
A \left( \begin{matrix} 2 & 2& 2 \cr 2 & 2 & 2 \end{matrix} \right) = \frac{1}{([4]!)^{2}}, \quad
B \left( 3, \left( \begin{matrix} 2 & 2& 2 \cr 2 & 2 & 2 \end{matrix} \right) \right) = 1, \quad
C \left( 3, \left( \begin{matrix} 2 & 2& 2 \cr 2 & 2 & 2 \end{matrix} \right) \right) = 1, \\
B \left( 4, \left( \begin{matrix} 2 & 2& 2 \cr 2 & 2 & 2 \end{matrix} \right) \right) = 1, \quad
C \left( 4, \left( \begin{matrix} 2 & 2& 2 \cr 2 & 2 & 2 \end{matrix} \right) \right) = 1,
\end{gathered}
$$
$$
\begin{gathered}
A \left( \begin{matrix} 2 & 2& 2 \cr 2 & 2 & 4 \end{matrix} \right) = \frac{- ([2]!)^{2}}{[4]! [5]!}, \quad
B \left(4,  \left( \begin{matrix} 2 & 2& 2 \cr 2 & 2 & 4 \end{matrix} \right) \right) = 1, \quad
C \left(4,  \left( \begin{matrix} 2 & 2& 2 \cr 2 & 2 & 4 \end{matrix} \right) \right) = 1, \\
A \left( \begin{matrix} 2 & 2& 4 \cr 2 & 2 & 4 \end{matrix} \right) = \frac{([2]!)^{4}}{([5]!)^{2}}, \quad
B \left( 4, \left( \begin{matrix} 2 & 2& 4 \cr 2 & 2 & 4 \end{matrix} \right) \right) = 1, \quad
C \left( 4, \left( \begin{matrix} 2 & 2& 4 \cr 2 & 2 & 4 \end{matrix} \right) \right) = 1.
\end{gathered}
$$
Thus,
\begin{equation} \label{6j}
\begin{gathered}
\left| \begin{matrix} 0 & 0 & 0 \\ 0 & 0 & 0 \end{matrix} \right| = 1, \quad
\left| \begin{matrix} 2 & 2 & 0 \\ 2 & 2 & 0 \end{matrix} \right| = \frac{1}{[3]}, \quad
\left| \begin{matrix} 2 & 2 & 0 \\ 2 & 2 & 2 \end{matrix} \right| = \frac{1}{[3]}, \quad  \\
\left| \begin{matrix} 2 & 2 & 2 \\ 2 & 2 & 2 \end{matrix} \right| = \frac{[5] - 1}{[4]!},  \quad
\left| \begin{matrix} 2 & 2 & 2 \\ 2 & 2 & 4 \end{matrix} \right| = - \frac{([2]!)^{2}}{[4]!}, \quad
\left| \begin{matrix} 2 & 2 & 4 \\ 2 & 2 & 4 \end{matrix} \right| = \frac{([2]!)^{2}}{[5]!}.
\end{gathered}
\end{equation}

In general, the weight of an admissible coloring $\zeta \in \text{\rm Col} (R)$ is defined by the rule
$$
w (\zeta, R) = \Pi_{v \in V(R)} \left| \begin{matrix} i & j & k \\ l & m & n \end{matrix} \right|_{v} \, \Pi_{u \in U(R)} w_{\zeta(u)}.
$$
The  \emph{weight of the special polyhedron} $R$ is defined as the sum of the weights of all its admissible colorings:
$$
w(R) = \sum_{\zeta \in \text{\rm Col} (R)} w(\zeta, R).
$$
By Theorem \ref{thm:TVextension} the weight $w(R)$ of $R$ is the \emph{order $r$ Turaev--Viro invariant} of the virtual manifold $[R]$.

Now consider again our polyhedra $P$ and $P''$. By definition, the weight of   $P$ is the sum of the weights of all admissible colorings:
$$
w(P) = w((0,0), P) + w((2,0), P) + w((2,2), P) + w((2,4), P) ,
$$
and similarly,
$$
w(P'') = w((0,0), P'') + w((2,0), P'') + w((2,2), P'') + w((2,4), P'').
$$
Since the special polyhedra $P$ and $P''$ have the same number of true vertices, the weights of monochrome colorings obviously coincide:
$$w((0,0), P) = w((0,0), P'')  \qquad \text{ and } \qquad w((2,2), P) = w((2,2), P'').$$

Next we compare the weights of the coloring $(2.0)$. Recall that $m\in \{1,2,3\}$ is the number of true vertices of $P$ traversed by the curve $\partial \beta$ exactly twice.   Taking into account the formula (\ref{w_i}), we obtain that
$$
w((2,0), P) =
\left| \begin{matrix} 2 & 2 & 2  \\ 2 & 2 & 2 \end{matrix} \right|^{n-3}
\left| \begin{matrix} 2 & 2 & 0  \\ 2 & 2 & 0 \end{matrix} \right|^m
\left| \begin{matrix} 2 & 2 & 2  \\ 2 & 2 & 0 \end{matrix} \right|^{3-m} [3],
$$
$$
w((2,0), P'') =
\left| \begin{matrix} 2 & 2 & 2  \\ 2 & 2 & 2   \end{matrix} \right|^{n-3}
\left| \begin{matrix} 2 & 2 & 2  \\ 2 & 2 & 0  \end{matrix} \right|^3  [3].
$$
It follows from (\ref{6j}) that
 $$w((2,0), P) = w((2,0), P'').$$
Let us compare the weights of the coloring $(2,4)$:
$$
w((2,4), P) =
\left| \begin{matrix} 2 & 2 & 2  \\ 2 & 2 & 2  \end{matrix} \right|^{n-3}
\left| \begin{matrix} 2 & 2 & 4  \\ 2 & 2 & 4 \end{matrix} \right|^m
\left| \begin{matrix} 2 & 2 & 2  \\ 2 & 2 & 4   \end{matrix} \right|^{3-m}  [3] [5],
$$
$$
w((2,4), P'') =
\left| \begin{matrix} 2 & 2 & 2  \\ 2 & 2 & 2  \end{matrix} \right|^{n-3}
\left| \begin{matrix} 2 & 2 & 2  \\ 2 & 2 & 4  \end{matrix} \right|^3 [3] [5].
$$
Using the formulas (\ref{6j}) and the condition $m \geqslant 1$ we get
$$w((2,4), P) \neq w((2,4), P''),$$
 and hence
$$w(P) \neq w(P'').$$
Thus, the virtual manifolds $[P]$ and $[P'']$ differ by the order $7$ Turaev--Viro invariant. This completes the proof.
\end{proof}

We  proved that the equivalent polyhedra $P$ and $P''$, and hence,   $P$ and $P'$,  have different Turaev--Viro invariants. This   contradiction shows that the original assumption on the existence of a special polyhedron $P'\in [P]$ with $n-1$ true vertices is false. This completes the proof of Theorem~\ref{thm:main}.

\section{Complexity of manifolds}
 \label{section5}

 In this section we   discuss the relationships between the  results above on the complexity of virtual $3$-manifolds with the complexity theory of classical $3$-manifolds.

 A compact polyhedron $P$ is said to be \emph{almost simple} if the link of each of its points can be embedded into the complete graph $K_4$ with four vertices. The points whose links are homeomorphic to   $K_4$ are the true vertices of~$P$. A spine of a $3$-manifold is said to be \emph{almost simple} if it is an almost simple polyhedron.

\begin{definition} \label{def:complexity}
 The  complexity  $c(M)$ of a compact $3$-manifold $M$ is the non-negative integer~$n$ such that $M$ has an almost simple spine with $n$ true vertices and has no almost simple spines with fewer true vertices.
\end{definition}

 The following theorem shows that the complexity of a $3$-manifold can be estimated from above by the complexity of the corresponding virtual manifold.

\begin{theorem} \label{theorem4}
 Let $P$ be a special spine of a $3$-manifold $M$. Then   $\operatorname{c}(M) \leq \operatorname{cv}[P]$.
\end{theorem}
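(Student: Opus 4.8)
The plan is to produce, inside the equivalence class $[P]$, a representative whose number of true vertices equals $\operatorname{cv}[P]$ and which is still a spine of $M$; being a special polyhedron it is in particular almost simple, so it witnesses the inequality $\operatorname{c}(M)\le\operatorname{cv}[P]$ directly from Definition~\ref{def:complexity}. Thus the whole argument is an unwinding of the definitions of $\operatorname{cv}$ and $\operatorname{c}$ together with the invariance of ``being a spine of $M$'' under $T^{\pm1}$.

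First I would dispose of the degenerate case in which $P$ has exactly one true vertex. By Claim~(2) of Lemma~\ref{lemma:elementary_properties} this forces $\operatorname{cv}[P]=1$ and $[P]=\{P\}$; since $P$ itself is a special, hence almost simple, spine of $M$ with one true vertex, we get $\operatorname{c}(M)\le 1=\operatorname{cv}[P]$. Then I would assume $P$ has at least two true vertices. By the definition of complexity of a virtual manifold there is a special polyhedron $P'\in[P]$ with exactly $\operatorname{cv}[P]$ true vertices, and by Claim~(2) of Lemma~\ref{lemma:elementary_properties} we also have $\operatorname{cv}[P]\ge 2$, so $P'$ has at least two true vertices as well. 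Now Claim~(2) of Theorem~\ref{thm:equivalence} applies: $P'$ is obtained from the special spine $P$ of $M$ by a finite sequence of moves $T^{\pm1}$, hence $P'$ is a special spine of the same manifold $M$. A special polyhedron is almost simple and its true vertices (links $\cong K_4$) are exactly its true vertices in the sense of almost simple polyhedra, so $P'$ is an almost simple spine of $M$ with $\operatorname{cv}[P]$ true vertices; therefore $\operatorname{c}(M)\le\operatorname{cv}[P]$.

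The only subtlety, and the place I would be most careful, is the interface between the two notions involved: one must check that a special spine counts as an almost simple spine with the same count of true vertices, and one must remember that Theorem~\ref{thm:equivalence} requires both polyhedra to have $\ge 2$ true vertices, which is precisely why the one-vertex case is handled separately. No estimate or computation is needed beyond this bookkeeping.
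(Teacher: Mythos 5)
Your proposal is correct and follows essentially the same route as the paper: take the minimal representative $P'\in[P]$, note that it is still a special (hence almost simple) spine of $M$, and read off the inequality from Definition~\ref{def:complexity}. You are in fact slightly more careful than the paper's own two-line proof, since you explicitly invoke Theorem~\ref{thm:equivalence}(2) to justify that $P'$ is a spine of $M$ and treat the one-vertex case separately where that theorem does not apply.
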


\begin{proof}
 By Definition \ref{def:complexity}, the complexity of $M$ does not exceed the number of   true vertices of any almost simple spine of $M$. Consequently, $\operatorname{c}(M)$ does not exceed the number of true vertices of any special spine of~$M$ as all special polyhedra are almost simple. Therefore, $\operatorname{c}(M) \leq \operatorname{cv}[P]$.
\end{proof}

\begin{remark} \label{remark2}
   The inequality of Theorem~\ref{theorem4} is strict for some compact $3$-manifolds. For example, if $P$ is a special spine of a manifold $M$ of complexity $0$, then $\operatorname{c}(M) < \operatorname{cv}[P]$ because by Lemma \ref{lemma:elementary_properties} we have $\operatorname{cv} [P] \geq 1$. Closed manifolds of complexity $0$ are $S^{3}$, $\mathbb RP^{3}$, and the lens space $L_{3,1}$. Orientable irreducible and boundary irreducible $3$-manifolds with nonempty boundary and complexity~$0$ are described in \cite{MatNik14}.
\end{remark}

 We describe a class of $3$-manifolds, for which the inequality in Theorem ~\ref{theorem4} becomes the equality.

\begin{theorem} \label{theorem5}
 Let $P$ be a special spine of an irreducible boundary irreducible $3$-manifold $M$ such that $\operatorname{c}(M) \neq 0,1$ and all proper annuli in $M$ are inessential. Then   $\operatorname{c}(M) = \operatorname{cv} [P]$.
\end{theorem}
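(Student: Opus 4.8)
Since Theorem~\ref{theorem4} already gives $c(M)\le \operatorname{cv}[P]$, the plan is to prove the reverse inequality $\operatorname{cv}[P]\le c(M)$. For this I would exhibit a special spine of $M$ with exactly $c(M)$ true vertices and then show that it represents the virtual manifold $[P]$; the desired inequality is then immediate from the definition of $\operatorname{cv}$.

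The existence of such a spine is the crux of the argument and the only place where the hypotheses are really used. By the complexity theory of classical $3$-manifolds (a theorem of Matveev, see \cite{MatBook}), a compact irreducible, boundary irreducible $3$-manifold that contains no essential proper annuli and has $c(M)\ne 0,1$ admits a \emph{special} spine with $c(M)$ true vertices: one shows that a minimal almost simple spine of such a manifold can be converted, without creating new true vertices, into a special polyhedron. The excluded values $c(M)=0,1$ together with the anannular condition are exactly what is needed to eliminate the finitely many exceptional manifolds for which a minimal spine need not be special. Fix such a special spine $Q$ of $M$ with $V(Q)=c(M)$.

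The remainder is bookkeeping with the equivalence theorem. Since $c(M)\ge 2$, the spine $Q$ has at least two true vertices. The spine $P$ also has at least two true vertices: if it had only one, then $\operatorname{cv}[P]=1$ by Lemma~\ref{lemma:elementary_properties}, and hence $c(M)\le \operatorname{cv}[P]=1$ by Theorem~\ref{theorem4}, contradicting $c(M)\ne 0,1$. Thus $P$ and $Q$ are two special spines of the same $3$-manifold $M$, each with at least two true vertices, so Theorem~\ref{thm:equivalence} shows that $P$ can be transformed into $Q$ by a finite sequence of moves $T^{\pm1}$. Therefore $Q\in[P]$, and by the definition of complexity $\operatorname{cv}[P]\le V(Q)=c(M)$. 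Combining this with Theorem~\ref{theorem4} gives $\operatorname{cv}[P]=c(M)$.

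The main obstacle is the passage from an almost simple minimal spine to a special one of the same complexity; this is a nontrivial input from classical complexity theory, and it is precisely there that irreducibility, boundary irreducibility, anannularity, and the restriction $c(M)\ge 2$ are all invoked. Once that step is granted, Theorems~\ref{theorem4} and~\ref{thm:equivalence} close the argument with no further difficulty.
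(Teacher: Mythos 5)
Your proposal is correct and follows essentially the same route as the paper: both invoke Matveev's result (cited in the paper as \cite[Theorem 2.2.4]{MatBook}) to produce a special spine with $c(M)$ true vertices from a minimal almost simple spine, and then use the $T^{\pm1}$-equivalence of special spines with at least two true vertices to conclude $\operatorname{cv}[P]\le c(M)$, combining with Theorem~\ref{theorem4} for the reverse inequality. The only cosmetic difference is that you justify $P$ having at least two true vertices via Lemma~\ref{lemma:elementary_properties} and Theorem~\ref{theorem4}, while the paper notes it directly from $c(M)\ge 2$.
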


\begin{proof}
 Let $\widehat{P}$ be a minimal almost simple spine of $M$, i.e. an almost simple spine with $\operatorname{c}(M)$ true vertices. Since $M$ is an irreducible and boundary irreducible and since all proper annuli in $M$ are inessential,   \cite[Theorem 2.2.4] {MatBook} implies that there is a special spine $P_1$ of $M$ having the same number $\operatorname{c}(M)$ of true vertices as $\widehat{P}$. The special spines $P_1$ and $P$ have at least two true vertices each, because $\operatorname{c}(M) \geq 2$. Therefore, by Remark \ref{remark:[P]_of_spines}, we have $[P_1] = [P]$ and $\operatorname{cv}[P] = \operatorname{cv} [P_{1}] = c(M)$.
\end{proof}

 It is known that all hyperbolic $3$-manifolds are irreducible, have incompressible boundary, and contain  no essential annuli. Therefore, Theorem~\ref{theorem5} implies the following corollary.

\begin{corollary} \label{cor2}
 Let $P$ be a special spine of a hyperbolic $3$-manifold $M$ with totally geodesic boundary such that $\operatorname{c}(M) \neq 0,1$. Then   $\operatorname{c}(M) = \operatorname{cv} [P]$.
\end{corollary}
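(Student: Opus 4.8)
The plan is to obtain the corollary as an immediate consequence of Theorem~\ref{theorem5}; the only thing to check is that a hyperbolic $3$-manifold $M$ with totally geodesic boundary satisfies the three geometric hypotheses of that theorem, after which the conclusion $\operatorname{c}(M)=\operatorname{cv}[P]$ follows formally together with the assumption $\operatorname{c}(M)\neq 0,1$.

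First I would recall the relevant standard facts about hyperbolic $3$-manifolds with totally geodesic boundary. Such an $M$ has interior covered by $\mathbb{H}^{3}$, hence is aspherical; in particular $\pi_{2}(M)=0$ and $M$ is irreducible. The boundary $\partial M$ being totally geodesic is $\pi_{1}$-injective, so no essential curve on $\partial M$ bounds a disc in $M$, i.e.\ $M$ is boundary irreducible. Finally, a hyperbolic manifold with geodesic boundary is an-annular: the existence of an essential proper annulus would force a $\mathbb{Z}\oplus\mathbb{Z}$ or a degenerate behaviour incompatible with the geodesic-boundary, finite-volume hyperbolic structure, so every proper annulus in $M$ is inessential. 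These are exactly the facts summarized in the sentence preceding the statement; I would cite the standard references on hyperbolic $3$-manifolds for them rather than reprove them.

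With irreducibility, boundary irreducibility, and the condition that all proper annuli in $M$ are inessential verified, and with $\operatorname{c}(M)\neq 0,1$ given by hypothesis, I would then apply Theorem~\ref{theorem5} to the special spine $P$ of $M$ directly, which yields $\operatorname{c}(M)=\operatorname{cv}[P]$. There is essentially no obstacle here: all of the substantive content — the existence of a special spine realizing $\operatorname{c}(M)$ and the passage through Remark~\ref{remark:[P]_of_spines} — is already packaged inside Theorem~\ref{theorem5}, so the corollary is a purely formal deduction once the geometric input is recorded.
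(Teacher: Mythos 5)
Your proposal is correct and follows exactly the route the paper takes: the paper likewise disposes of the corollary by noting that hyperbolic $3$-manifolds (with totally geodesic boundary) are irreducible, boundary irreducible, and contain no essential annuli, and then invoking Theorem~\ref{theorem5}. Your added sketch of why those three geometric facts hold is standard and unobjectionable.
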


 The next result follows from Theorem~\ref{thm:main} and Remark~\ref{cor2}.

\begin{theorem} \label{theorem9}
  Let $P$ be a special spine of a hyperbolic $3$-manifold $M$ such that $P$ has $n \geq 2$ true vertices and two $2$-components. If the boundary curves of both $2$-components of $P$ are not short, then   $\operatorname{c} (M) = n$.
\end{theorem}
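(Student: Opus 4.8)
The plan is to obtain the statement immediately from Theorem~\ref{thm:main} and Corollary~\ref{cor2}. By hypothesis $P$ is a special polyhedron --- in fact a special spine of $M$ --- with $n\ge 2$ true vertices, exactly two $2$-components, and with neither boundary curve short. These are precisely the hypotheses of Theorem~\ref{thm:main}, so the first step is simply to invoke it and conclude $\operatorname{cv}[P]=n$. The remaining step is to identify this virtual complexity with the honest complexity $\operatorname{c}(M)$, and for that I would use Corollary~\ref{cor2}, which gives $\operatorname{c}(M)=\operatorname{cv}[P]$ as soon as $M$ is a hyperbolic $3$-manifold with totally geodesic boundary and $\operatorname{c}(M)\neq 0,1$.

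Thus two small verifications are needed. First, $\partial M\neq\emptyset$: if $M$ were closed, then $M$ would be obtained from the spine $P$ by attaching a single $3$-cell, so $0=\chi(M)=\chi(P)-1=(2-n)-1=1-n$, forcing $n=1$ and contradicting $n\ge 2$; hence $\partial M\neq\emptyset$, we are in the totally geodesic boundary case, and $\chi(M)=\chi(P)=2-n$. Second, $\operatorname{c}(M)\neq 0,1$: every $3$-manifold of complexity $0$ or $1$ is elliptic, reducible, boundary-reducible, or Seifert fibered --- the closed ones of complexity $0$ are $S^{3}$, $\mathbb RP^{3}$, $L_{3,1}$ (Remark~\ref{remark2}), and for the irreducible, boundary-irreducible case with nonempty boundary see~\cite{MatNik14} --- and none of these is hyperbolic, so $\operatorname{c}(M)\ge 2$. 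With these in place Corollary~\ref{cor2} applies and yields $\operatorname{c}(M)=\operatorname{cv}[P]=n$.

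I do not foresee a substantial obstacle: all the real work has already been done, in Theorem~\ref{thm:main} (proved in Section~\ref{section4} by distinguishing $[P]$ from $[P'']$ via the order-$7$ Turaev--Viro invariant, after pinning down the combinatorics of $P$ with the $\varepsilon$-invariant) and in Theorem~\ref{theorem5} underlying Corollary~\ref{cor2}; both may be used freely here. The only points that genuinely have to be addressed are the two bookkeeping verifications above, the more delicate being the clarification that $M$ has totally geodesic boundary --- since Corollary~\ref{cor2} is stated only in that setting, while the cusped case, in which $\partial M$ is toral and $n=2$, would have to be handled through Theorem~\ref{theorem5} directly --- together with $\operatorname{c}(M)\notin\{0,1\}$. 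Incidentally, the Euler characteristic computation also shows that in the geodesic boundary setting the case $n=2$ does not occur, so necessarily $n\ge 3$.
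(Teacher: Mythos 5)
Your proposal is correct and follows exactly the paper's route: the paper's entire proof of Theorem~\ref{theorem9} is the single remark that it follows from Theorem~\ref{thm:main} combined with Corollary~\ref{cor2}. The additional verifications you supply (that $\partial M\neq\emptyset$, that $\operatorname{c}(M)\notin\{0,1\}$ for a hyperbolic $M$ with geodesic boundary, and the observation that the Euler characteristic forces $n\geq 3$ and rules out all but the totally geodesic boundary case to which Corollary~\ref{cor2} applies) are details the paper leaves implicit, and they are sound.
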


 The complexity of manifolds which have special spines with $n \geq 2$ true vertices and only one $2$-component was calculated in \cite{FrigMarPet03}  in terms of ideal triangulations. We reformulate this result in terms of spines.

\begin{theorem}  \cite[Theorem 1.2]{FrigMarPet03} \label{theorem7}
 Let $P$ be a special spine of a $3$-manifold $M$ such that $P$ has $n \geq 2$ true vertices and one $2$-component. Then   $\operatorname{c} (M) = n$.
\end{theorem}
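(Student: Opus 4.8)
The plan is to prove the two inequalities $\operatorname{c}(M)\le n$ and $\operatorname{c}(M)\ge n$ separately. The upper bound is immediate: $P$ is a special, hence almost simple, spine of $M$ with $n$ true vertices, so by Definition~\ref{def:complexity} we have $\operatorname{c}(M)\le n$. (Equivalently, a single $2$-component gives $\operatorname{cv}[P]=n$ by Corollary~\ref{cor1}, and then $\operatorname{c}(M)\le\operatorname{cv}[P]=n$ by Theorem~\ref{theorem4}.) All the work is therefore in the lower bound $\operatorname{c}(M)\ge n$, that is, in ruling out an almost simple spine of $M$ with fewer than $n$ true vertices.

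For the lower bound I would use the order-$5$ Turaev--Viro invariant, equivalently the $\varepsilon$-invariant $t$ of Section~\ref{section4}, in the same spirit as the proof of Theorem~\ref{thm:main}. The starting observation is that, because $P$ has a single $2$-component, $P$ has no proper simple subpolyhedron whatsoever: by the compactness argument used in Lemma~\ref{lemma41}, a simple subpolyhedron meeting the $2$-component must contain it entirely and hence equals $P$, while one missing it is empty. Consequently $\mathcal F(P)=\{\emptyset,P\}$ and, since $\chi(P)=1-n$,
$$
t(M)=t(P)=1+(-1)^{n}\varepsilon^{\,1-2n}.
$$
Thus $t(M)-1=(-1)^{n}\varepsilon^{\,1-2n}$ is a definite, very small nonzero number whose modulus and sign record $n$.

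The remaining and genuinely hard step is to convert this exact value into the bound $\operatorname{c}(M)\ge n$. Here I would invoke Matveev's principle that $t$ is a topological invariant of $M$ computable from \emph{any} almost simple spine $X$ as the signed sum $\sum_{Q}w_{\varepsilon}(Q)$ over the simple subpolyhedra $Q\subseteq X$. If $X$ has $k$ true vertices, each nonempty $Q$ in this sum has $V(Q)\le k$ and contributes a weight of modulus $\varepsilon^{\chi(Q)-V(Q)}$; the goal is to show that a spine with $k<n$ true vertices can only produce a value of $t(M)-1$ of modulus strictly larger than $\varepsilon^{\,1-2n}$, contradicting the computation above.

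I expect this estimate to be the main obstacle. The difficulty is twofold. First, $X$ is only almost simple, so $X$ itself need not be a simple subpolyhedron, and the convenient ``leading term'' $w_{\varepsilon}(X)$ available in the special case is absent. Second, one must preclude delicate cancellations among the weights $w_{\varepsilon}(Q)$ of the various simple subpolyhedra of $X$. Controlling both is exactly the content of the Turaev--Viro lower bound for complexity of Frigerio--Martelli--Petronio, which I would reproduce. In the special case where $M$ is irreducible, boundary irreducible, contains no essential annuli, and has $\operatorname{c}(M)\neq 0,1$, one can instead bypass this entirely by applying Theorem~\ref{theorem5}, yielding $\operatorname{c}(M)=\operatorname{cv}[P]=n$ at once; the general statement, however, genuinely requires the invariant-theoretic estimate rather than Theorem~\ref{theorem5}.
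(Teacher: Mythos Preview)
This theorem is not proved in the paper at all: it is quoted from \cite{FrigMarPet03} and merely restated in the language of spines. So there is no in-paper argument to compare your proposal to; the question is whether your sketch stands on its own.

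Your upper bound is fine, and your computation $t(P)=1+(-1)^n\varepsilon^{\,1-2n}$ is correct. The problem is the lower bound. You correctly identify that the crux is to rule out an almost simple spine with fewer than $n$ true vertices, but your proposed mechanism---showing that any almost simple spine $X$ with $k<n$ true vertices must give $|t(M)-1|>\varepsilon^{\,1-2n}$---is not substantiated. You say this ``is exactly the content of the Turaev--Viro lower bound \ldots\ of Frigerio--Martelli--Petronio, which I would reproduce''; but the theorem you are asked to prove \emph{is} Theorem~1.2 of \cite{FrigMarPet03}, so this is circular. Moreover, there is no such general invariant-theoretic estimate in \cite{FrigMarPet03} to reproduce: nothing prevents, a priori, delicate cancellations among the $w_\varepsilon(Q)$ for subpolyhedra of an almost simple $X$, and for merely almost simple $X$ the set $\mathcal F(X)$ is not under control in the way you need.

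The actual strategy in \cite{FrigMarPet03} is geometric, not invariant-theoretic. They show that every $3$-manifold admitting a special spine with $n\ge 2$ true vertices and a single $2$-cell is hyperbolic with totally geodesic boundary. Once that is known, $M$ is irreducible, boundary irreducible and anannular, so (as in the proof of Theorem~\ref{theorem5} here) a minimal almost simple spine can be replaced by a \emph{special} spine with the same number of true vertices; then the Euler-characteristic count $\chi(P')=d'-n'=\chi(M)=1-n$ with $d'\ge 1$ forces $n'\ge n$. Your final paragraph already notes this route under extra hypotheses; the substantive content of \cite{FrigMarPet03} is precisely that those hypotheses are automatically satisfied, and that is what your proposal is missing.
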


\section{Two examples}
 \label{section6}

 We give two examples of infinite series of $3$-manifolds and their special spines  satisfying the conditions of Theorem \ref{theorem9}. The complexity of these manifolds was   previously computed by the authors in \cite{VesFom11} and \cite{Stekl2015}.

\subsection{Paoluzzi--Zimmermann manifolds}

We describe a two-parameter family of $3$-manifolds $M_{n,k}$ with nonempty boundary, constructed by Paoluzzi and Zimmermann  \cite{PaolZim96}.  For an integer $n\geqslant 4$ consider an $n$-gonal bipyramid $\mathcal B_{n}$, which is the union of pyramids
$$N V_{0} V_{1} \ldots V_{n-1} \qquad \text{ and } \qquad S V_{0} V_{1} \ldots V_{n-1}$$
meeting each other along the common $n$-gonal base
$V_{0} V_{1} \ldots V_{n-1}$. Let $k$ be an integer such that $0 \leqslant k < n$ and $gcd(n, $2$-k)=1$. For each $i = 0, \ldots, n-1$ consider a transformation $y_{i}$ which identifies the face $V_{i} V_{i+1} N$ with
the face $S V_{i+k} V_{i+k+1}$ (indices are taken mod $n$ and the
vertices are glued to each other in the order in which they
are written).

\begin{figure}[htb]
\centering
\unitlength=0.4mm
\begin{picture}(180,130)(-90,-20)
\thicklines
\put(-120,40){\line(2,1){120}}
\put(-60,40){\line(1,1){60}}
\put(-20,40){\line(1,3){20}}
\put(0,40){\line(0,1){60}}
\put(20,40){\line(-1,3){20}}
\put(60,40){\line(-1,1){60}}
\put(120,40){\line(-2,1){120}}
\put(-120,40){\vector(2,1){60}}
\put(-60,40){\vector(1,1){30}}
\put(-20,40){\vector(1,3){10}}
\put(0,40){\vector(0,1){30}}
\put(20,40){\vector(-1,3){10}}
\put(60,40){\vector(-1,1){30}}
\put(120,40){\vector(-2,1){60}}
\put(0,-20){\line(-2,1){120}}
\put(0,-20){\line(-1,1){60}}
\put(0,-20){\line(-1,3){20}}
\put(0,-20){\line(0,1){60}}
\put(0,-20){\line(1,3){20}}
\put(0,-20){\line(1,1){60}}
\put(0,-20){\line(2,1){120}}
\put(0,-20){\vector(2,1){60}}
\put(0,-20){\vector(1,1){30}}
\put(0,-20){\vector(1,3){10}}
\put(0,-20){\vector(0,1){30}}
\put(0,-20){\vector(-1,3){10}}
\put(0,-20){\vector(-1,1){30}}
\put(0,-20){\vector(-2,1){60}}
\put(-120,40){\line(1,0){60}}
\put(-60,40){\line(1,0){40}}
\put(-20,40){\line(1,0){20}}
\put(0,40){\line(1,0){20}}
\put(20,40){\line(1,0){40}}
\put(60,40){\line(1,0){60}}
\put(-120,40){\vector(1,0){35}}
\put(-60,40){\vector(1,0){25}}
\put(-20,40){\vector(1,0){15}}
\put(0,40){\vector(1,0){10}}
\put(20,40){\vector(1,0){20}}
\put(60,40){\vector(1,0){30}}
\put(5,105){\makebox(0,0)[cc]{$N$}}
\put(5,-25){\makebox(0,0)[cc]{$S$}}
\put(-125,45){\makebox(0,0)[cc]{$V_{5}$}}
\put(-65,45){\makebox(0,0)[cc]{$V_{0}$}}
\put(-25,45){\makebox(0,0)[cc]{$V_{1}$}}
\put(-5,45){\makebox(0,0)[cc]{$V_{2}$}}
\put(25,45){\makebox(0,0)[cc]{$V_{3}$}}
\put(65,45){\makebox(0,0)[cc]{$V_{4}$}}
\put(125,45){\makebox(0,0)[cc]{$V_{5}$}}
\end{picture}
\caption{The bipyramid ${\mathcal B}_{6}$.} \label{fig:B6}
\end{figure}
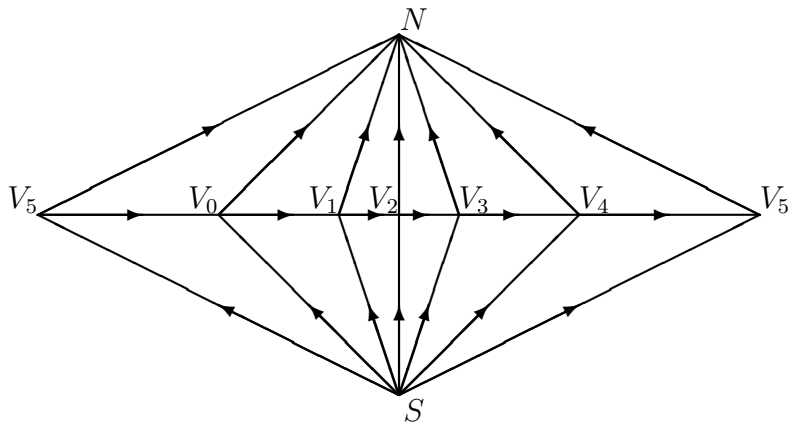

 The identifications $\{ y_{0}, y_{1}, \ldots , y_{n-1} \}$ define  
equivalence relations on the sets of faces, edges, and vertices of
the bipyramid. It is easy to see that   the faces are partitioned into
pairs of equivalent faces, the edges constitute one equivalence class and so do the vertices  (this is guaranteed by the above
conditions on~$k$). Denote the associated quotient space  by
$M_{n,k}^{*}$. It is an orientable pseudomanifold with one singular
point, since $\chi(M_{n,k}^{*}) = 1 - 1 + n - 1 = n-1 \neq 0$.
Cutting off an open cone neighborhood of the singular point of
$M_{n,k}^{*}$ we obtain a compact manifold $M_{n,k}$ with one boundary
component.

 Let us construct a special spine $P_{n,k}$ of $M_{n,k}$. Cut $\mathcal B_{n}$ into $n$ tetrahedra $\mathcal T_{i} = N S V_{i} V_{i+1}$, where $i = 0, 1, \ldots, n-1$. For each $\mathcal T_{i}$ consider the union $R_i$ of the links of all four vertices of $\mathcal T_{i}$ in the first barycentric subdivision. The pseudomanifold $M_{n,k}^{*}$ can be obtained by gluing the tetrahedra $\mathcal T_{0}, \ldots , \mathcal T_{n-1}$ via the identifications $\{ y_{0}, y_{1}, \ldots , y_{n-1} \}$. This gluing determines a pseudotriangulation $\mathcal T$ of $M_{n,k}^{*}$ and induces a gluing of the corresponding polyhedra $R_i$, $i = 0, \ldots, n-1$, together. This gluing yields a special spine $P_{n,k} = \cup_i R_i$ of $M_{n,k}$. Since every $R_i$ is homeomorphic to a cone over $K_4$, the spine $P_{n,k}$ has exactly $n$ true vertices.

\begin{corollary} \cite{VesFom11}
For every integer $n \geqslant 4$ we have $c(M_{n,k})=n$.
\end{corollary}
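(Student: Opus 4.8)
The plan is to deduce the corollary directly from Theorem~\ref{theorem9}, applied to the special spine $P_{n,k}$ of $M_{n,k}$. Four things have to be checked: that $M_{n,k}$ is a hyperbolic $3$-manifold (with totally geodesic boundary), that $P_{n,k}$ has $n\geq 2$ true vertices, that $P_{n,k}$ has exactly two $2$-components, and that the boundary curves of both $2$-components are not short. The first point is the main external input: Paoluzzi and Zimmermann proved in \cite{PaolZim96} that every $M_{n,k}$ is hyperbolic with totally geodesic boundary. The second point is immediate, since $n\geq 4$ and, as noted above, $P_{n,k}$ has exactly $n$ true vertices.

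For the remaining two points I would use that $P_{n,k}=\cup_i R_i$ is, by its construction, the special spine of $M_{n,k}$ dual to the pseudotriangulation $\mathcal T$ of $M_{n,k}^{*}$ assembled from the tetrahedra $\mathcal T_0,\dots,\mathcal T_{n-1}$. Under this duality the true vertices of $P_{n,k}$ correspond to the tetrahedra $\mathcal T_i$, the $1$-cells of $P_{n,k}$ to the triangular faces of $\mathcal T$, and the $2$-components of $P_{n,k}$ to the edges of $\mathcal T$. So it suffices to count the edges of $\mathcal T$. The bipyramid $\mathcal B_n$ has exactly the $3n$ edges $NV_i$, $SV_i$, $V_iV_{i+1}$ (with $i=0,\dots,n-1$), and these are amalgamated into a single edge of $M_{n,k}^{*}$ — this is precisely the statement, recalled above, that the edges of $\mathcal B_n$ form one equivalence class. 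Cutting $\mathcal B_n$ into the tetrahedra $\mathcal T_i$ introduces one new edge, the axis $NS$, which is interior to $\mathcal B_n$ and therefore untouched by the identifications $y_0,\dots,y_{n-1}$; hence it forms an edge of $\mathcal T$ all by itself. Thus $\mathcal T$ has exactly two edges, and $P_{n,k}$ has exactly two $2$-components, say $\xi$ (dual to the axis $NS$) and $\eta$ (dual to the amalgamated edge $E$).

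It remains to see that $\partial\xi$ and $\partial\eta$ are not short. Here I would use that the boundary curve of the $2$-component dual to an edge $e$ of $\mathcal T$ passes through the true vertex dual to a tetrahedron $\mathcal T_i$ exactly as many times as $e$ occurs among the six edges of $\mathcal T_i$. The axis $NS$ occurs once in each of the $n$ tetrahedra, so $\partial\xi$ traverses $n$ distinct true vertices; since $n\geq 4>3$, it is not short. Each tetrahedron $\mathcal T_i=NSV_iV_{i+1}$ has, apart from $NS$, the five edges $NV_i$, $NV_{i+1}$, $SV_i$, $SV_{i+1}$, $V_iV_{i+1}$, all of which map to $E$; hence $\partial\eta$ passes through each of the $n$ true vertices five times and so is not short either. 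Theorem~\ref{theorem9} now yields $c(M_{n,k})=n$.

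The only genuinely delicate point is the combinatorial bookkeeping of the edge identifications: confirming that the $3n$ boundary edges of $\mathcal B_n$ collapse to a single edge of $M_{n,k}^{*}$ while the axis $NS$ stays separate, and translating correctly between the degrees of the two edges of $\mathcal T$ and the numbers of true vertices traversed by the corresponding boundary curves of $P_{n,k}$. Once these are in place, the corollary follows from Theorem~\ref{theorem9} together with the hyperbolicity theorem of Paoluzzi and Zimmermann.
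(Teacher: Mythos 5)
Your proposal is correct and follows essentially the same route as the paper: both verify the hypotheses of Theorem~\ref{theorem9} for the spine $P_{n,k}$ by noting that its two $2$-components correspond to the two edges of the pseudotriangulation $\mathcal T$ (the amalgamated boundary edge of $\mathcal B_n$ and the axis $NS$), that neither boundary curve is short because each traverses all $n\geq 4$ true vertices, and that $M_{n,k}$ is hyperbolic with totally geodesic boundary by Paoluzzi--Zimmermann. The only difference is that you spell out the edge-identification bookkeeping and the multiplicities ($1$ and $5$ per tetrahedron) that the paper leaves implicit.
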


\begin{proof}
  By the construction of the special spine $P_{n,k}$, its $2$-components are in a one-to-one correspondence with the edges of the pseudotriangulation $\mathcal T$. Since $\mathcal T$ has two edges, $P_{n,k}$ has two $2$-components. In addition, the boundary curves of   both these $2$-components   are not short, because each of them traverses all $n\geqslant 4 $ true vertices of the spine.

  Paoluzzi and Zimmermann  \cite{PaolZim96} proved that the manifolds  $M_{n,k}$ are hyperbolic with totally geodesic boundary. Thus the manifolds $M_{n,k}$ and their special spines $P_{n,k}$ satisfy the conditions of Theorem \ref{theorem9}. This implies the corollary.
\end{proof}

\subsection{Manifolds from \cite{Stekl2015}}

We describe a family of $3$-manifolds $N_{n}$ with nonempty boundary constructed in~\cite{Stekl2015}. Let $s$ be a nonnegative integer and $n = 5+4s$. We construct a plane $4$-regular graph $G_{n}$ with decoration of vertices and edges as follows. The graph $G_{n}$ has $n$ vertices, two loops, and $n-1$ double edges. At each vertex of the graph, over- and under- passing are specified (just as in a crossing in a knot diagram), and each edge is assigned an element of the cyclic group $\mathbb Z_{3} = \{ 0,1,2 \}$. The decorated graph $G_{5}$ is shown in Fig.~\ref{figG5}.

\begin{figure}[h]
\begin{center}
\special{em:linewidth 1.4pt} \unitlength=.34mm
\begin{picture}(0,50)(0,0)
\thicklines
\put(-80,20){\circle*{3}}
\put(-40,20){\circle*{3}}
\put(0,20){\circle*{3}}
\put(40,20){\circle*{3}}
\put(80,20){\circle*{3}}
\qbezier(-80,20)(-60,40)(-43,23)
\qbezier(-77,17)(-60,0)(-40,20)
\qbezier(-40,20)(-20,40)(-3,23)
\qbezier(-37,17)(-20,0)(0,20)
\qbezier(0,20)(20,40)(37,23)
\qbezier(3,17)(20,0)(40,20)
\qbezier(40,20)(60,40)(77,23)
\qbezier(43,17)(60,0)(80,20)
\qbezier(-80,20)(-80,10)(-90,10)
\qbezier(-90,10)(-100,10)(-100,20)
\qbezier(-100,20)(-100,30)(-90,30)
\qbezier(-90,30)(-83,30)(-81,24)
\qbezier(80,20)(80,30)(90,30)
\qbezier(90,30)(100,30)(100,20)
\qbezier(100,20)(100,10)(90,10)
\qbezier(90,10)(83,10)(81,16)
\qbezier(-40,40)(-40,37)(-40,35)
\qbezier(-40,30)(-40,27)(-40,25)
\qbezier(-40,15)(-40,15)(-40,10)
\qbezier(-40,5)(-40,5)(-40,0)
\qbezier(40,40)(40,37)(40,35)
\qbezier(40,30)(40,27)(40,25)
\qbezier(40,15)(40,15)(40,10)
\qbezier(40,5)(40,5)(40,0)
\put(-105,20){\makebox(0,0)[c]{$1$}}
\put(105,20){\makebox(0,0)[c]{$1$}}
\put(-60,35){\makebox(0,0)[c]{$0$}}
\put(-60,5){\makebox(0,0)[c]{$1$}}
\put(-20,35){\makebox(0,0)[c]{$1$}}
\put(-20,5){\makebox(0,0)[c]{$1$}}
\put(20,35){\makebox(0,0)[c]{$0$}}
\put(20,5){\makebox(0,0)[c]{$0$}}
\put(60,35){\makebox(0,0)[c]{$0$}}
\put(60,5){\makebox(0,0)[c]{$1$}}
\end{picture}
\end{center} \caption{The decorated graph $G_{5}$.} \label{figG5}
\end{figure}
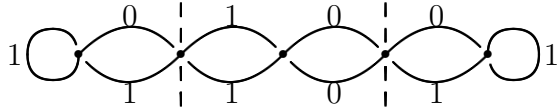

The graph $G_{5}$ has a block structure: it is composed of three subgraphs $A$, $C$, and $E$ shown in Fig.~\ref{figG5block}. We express this fact as $G_{5} = A \cdot C \cdot E$. Each of the graphs $A$ and $E$ has one $4$-valent vertex and one $2$-valent vertex. The graph $C$ has one $4$-valent vertex and two $2$-valent vertices. The decorations of the vertices and edges of the graphs $A$, $C$, and $E$ are induced by the decoration of
the graph $G_{5}$.

\begin{figure}[h]
\begin{center}
\special{em:linewidth 1.4pt} \unitlength=.34mm
\begin{picture}(0,60)(0,-10)
\thicklines
\put(-40,20){\circle*{3}}
\put(0,20){\circle*{3}}
\put(40,20){\circle*{3}}
\qbezier(-40,20)(-20,40)(-3,23)
\qbezier(-37,17)(-20,0)(0,20)
\qbezier(0,20)(20,40)(37,23)
\qbezier(3,17)(20,0)(40,20)
\put(-80,20){\circle*{3}}
\put(-120,20){\circle*{3}}
\qbezier(-120,20)(-120,10)(-130,10)
\qbezier(-130,10)(-140,10)(-140,20)
\qbezier(-140,20)(-140,30)(-130,30)
\qbezier(-130,30)(-123,30)(-121,24)
\qbezier(-120,20)(-100,40)(-83,23)
\qbezier(-117,17)(-100,0)(-80,20)
\put(-120,-10){\makebox(0,0)[c]{$A$}}
\put(-145,20){\makebox(0,0)[c]{$1$}}
\put(-100,35){\makebox(0,0)[c]{$0$}}
\put(-100,5){\makebox(0,0)[c]{$1$}}
\put(80,20){\circle*{3}}
\put(120,20){\circle*{3}}
\qbezier(120,20)(120,30)(130,30)
\qbezier(130,30)(140,30)(140,20)
\qbezier(140,20)(140,10)(130,10)
\qbezier(130,10)(123,10)(121,16)
\put(145,20){\makebox(0,0)[c]{$1$}}
\put(120,-10){\makebox(0,0)[c]{$E$}}
\put(100,35){\makebox(0,0)[c]{$0$}}
\put(100,5){\makebox(0,0)[c]{$1$}}
\qbezier(80,20)(100,40)(117,23)
\qbezier(83,17)(100,0)(120,20)
\put(0,-10){\makebox(0,0)[c]{$C$}}
\put(-20,35){\makebox(0,0)[c]{$1$}}
\put(-20,5){\makebox(0,0)[c]{$1$}}
\put(20,35){\makebox(0,0)[c]{$0$}}
\put(20,5){\makebox(0,0)[c]{$0$}}
\end{picture}
\end{center} \caption{Subgraphs $A$, $C$, and $E$ of $G_{5}$.} \label{figG5block}
\end{figure}
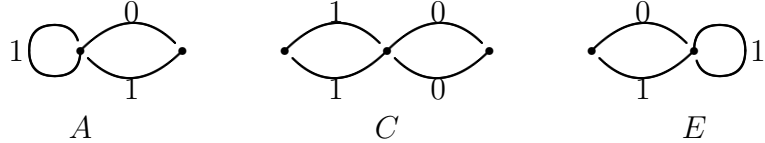

Next, we define graphs $B$ and $D$ as shown in Fig. ~\ref{figBDblocks}. The graphs $B$ and $D$ have the same combinatorial structure as the graph $C$ and the same decorations of vertices; however, they differ by the decorations
of edges.

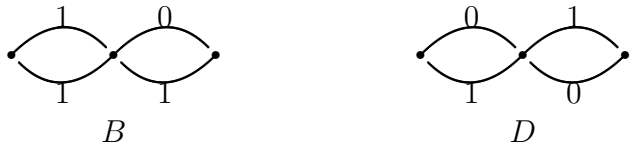
\begin{figure}[h]
\begin{center}
\special{em:linewidth 1.4pt} \unitlength=.34mm
\begin{picture}(0,60)(0,-10)
\thicklines
\put(-120,20){\circle*{3}}
\put(-80,20){\circle*{3}}
\put(-40,20){\circle*{3}}
\qbezier(-120,20)(-100,40)(-83,23)
\qbezier(-117,17)(-100,0)(-80,20)
\qbezier(-80,20)(-60,40)(-43,23)
\qbezier(-77,17)(-60,0)(-40,20)
\put(-80,-10){\makebox(0,0)[c]{$B$}}
\put(-100,35){\makebox(0,0)[c]{$1$}}
\put(-100,5){\makebox(0,0)[c]{$1$}}
\put(-60,35){\makebox(0,0)[c]{$0$}}
\put(-60,5){\makebox(0,0)[c]{$1$}}
\put(40,20){\circle*{3}}
\put(80,20){\circle*{3}}
\put(120,20){\circle*{3}}
\qbezier(40,20)(60,40)(77,23)
\qbezier(43,17)(60,0)(80,20)
\qbezier(80,20)(100,40)(117,23)
\qbezier(83,17)(100,0)(120,20)
\put(80,-10){\makebox(0,0)[c]{$D$}}
\put(60,35){\makebox(0,0)[c]{$0$}}
\put(60,5){\makebox(0,0)[c]{$1$}}
\put(100,35){\makebox(0,0)[c]{$1$}}
\put(100,5){\makebox(0,0)[c]{$0$}}
%
\end{picture}
\end{center} \caption{Graphs $B$ and $D$.} \label{figBDblocks}
\end{figure}

Let $G_{n}$ be the decorated graph    composed successively of the subgraph $A$, $s$ copies of the subgraph $B$, the subgraph $C$, $s$ copies of the subgraph $D$, and the subgraph $E$. In other words,
$G_{n} = A \cdot B^{s} \cdot C \cdot D^{s} \cdot E$. The graph $G_{9}$ is shown in Fig.~\ref{figG9}.

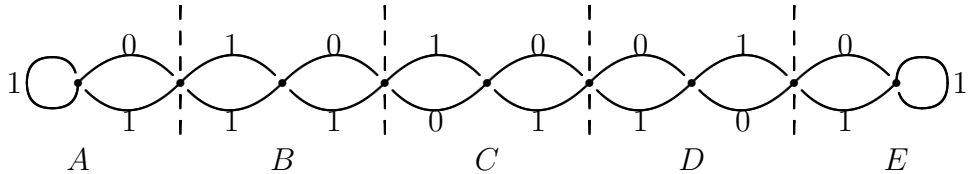
\begin{figure}[h]
\begin{center}
\special{em:linewidth 1.4pt} \unitlength=.34mm
\begin{picture}(0,60)(0,-10)
\thicklines
\put(-160,20){\circle*{3}}
\put(-120,20){\circle*{3}}
\put(-80,20){\circle*{3}}
\put(-40,20){\circle*{3}}
\put(0,20){\circle*{3}}
\put(40,20){\circle*{3}}
\put(80,20){\circle*{3}}
\put(120,20){\circle*{3}}
\put(160,20){\circle*{3}}
\qbezier(-160,20)(-140,40)(-123,23)
\qbezier(-157,17)(-140,0)(-120,20)
\qbezier(-120,20)(-100,40)(-83,23)
\qbezier(-117,17)(-100,0)(-80,20)
\qbezier(-80,20)(-60,40)(-43,23)
\qbezier(-77,17)(-60,0)(-40,20)
\qbezier(-40,20)(-20,40)(-3,23)
\qbezier(-37,17)(-20,0)(0,20)
\qbezier(0,20)(20,40)(37,23)
\qbezier(3,17)(20,0)(40,20)
\qbezier(40,20)(60,40)(77,23)
\qbezier(43,17)(60,0)(80,20)
\qbezier(80,20)(100,40)(117,23)
\qbezier(83,17)(100,0)(120,20)
\qbezier(120,20)(140,40)(157,23)
\qbezier(123,17)(140,0)(160,20)
\qbezier(-160,20)(-160,10)(-170,10)
\qbezier(-170,10)(-180,10)(-180,20)
\qbezier(-180,20)(-180,30)(-170,30)
\qbezier(-170,30)(-163,30)(-161,24)
\qbezier(160,20)(160,30)(170,30)
\qbezier(170,30)(180,30)(180,20)
\qbezier(180,20)(180,10)(170,10)
\qbezier(170,10)(163,10)(161,16)
\qbezier(-120,50)(-120,50)(-120,45)
\qbezier(-120,40)(-120,37)(-120,35)
\qbezier(-120,30)(-120,27)(-120,25)
\qbezier(-120,15)(-120,15)(-120,10)
\qbezier(-120,5)(-120,5)(-120,0)
\qbezier(-40,50)(-40,50)(-40,45)
\qbezier(-40,40)(-40,37)(-40,35)
\qbezier(-40,30)(-40,27)(-40,25)
\qbezier(-40,15)(-40,15)(-40,10)
\qbezier(-40,5)(-40,5)(-40,0)
\qbezier(40,50)(40,50)(40,45)
\qbezier(40,40)(40,37)(40,35)
\qbezier(40,30)(40,27)(40,25)
\qbezier(40,15)(40,15)(40,10)
\qbezier(40,5)(40,5)(40,0)
\qbezier(120,50)(120,50)(120,45)
\qbezier(120,40)(120,37)(120,35)
\qbezier(120,30)(120,27)(120,25)
\qbezier(120,15)(120,15)(120,10)
\qbezier(120,5)(120,5)(120,0)
\put(-160,-10){\makebox(0,0)[c]{$A$}}
\put(-80,-10){\makebox(0,0)[c]{$B$}}
\put(0,-10){\makebox(0,0)[c]{$C$}}
\put(80,-10){\makebox(0,0)[c]{$D$}}
\put(160,-10){\makebox(0,0)[c]{$E$}}
\put(-185,20){\makebox(0,0)[c]{$1$}}
\put(185,20){\makebox(0,0)[c]{$1$}}
\put(-140,35){\makebox(0,0)[c]{$0$}}
\put(-140,5){\makebox(0,0)[c]{$1$}}
\put(-100,35){\makebox(0,0)[c]{$1$}}
\put(-100,5){\makebox(0,0)[c]{$1$}}
\put(-60,35){\makebox(0,0)[c]{$0$}}
\put(-60,5){\makebox(0,0)[c]{$1$}}
\put(-20,35){\makebox(0,0)[c]{$1$}}
\put(-20,5){\makebox(0,0)[c]{$0$}}
\put(20,35){\makebox(0,0)[c]{$0$}}
\put(20,5){\makebox(0,0)[c]{$1$}}
\put(60,35){\makebox(0,0)[c]{$0$}}
\put(60,5){\makebox(0,0)[c]{$1$}}
\put(100,35){\makebox(0,0)[c]{$1$}}
\put(100,5){\makebox(0,0)[c]{$0$}}
\put(140,35){\makebox(0,0)[c]{$0$}}
\put(140,5){\makebox(0,0)[c]{$1$}}
\end{picture}
\end{center} \caption{The decorated graph $G_{9}$.} \label{figG9}
\end{figure}

Note that the graphs $G_{n}$ belong to the class of o-graphs defined in \cite{B-P}. That paper presents an algorithm   constructing a special polyhedron from an arbitrary o-graph. According to the algorithm, to obtain the polyhedron determined by   $G_{n}$ one should replace the subgraphs $A$, $B$, $C$, $D$, and $E$ by the similarly named blocks shown in Figs.~\ref{fig101} and~\ref{fig102}. As a result of such gluing of
blocks, we obtain a special polyhedron. It is proved in \cite{B-P} that this polyhedron is a special spine of a compact orientable $3$-manifold with nonempty boundary. Let $N_{n}$ and $P_{n}$ be a manifold and its special spine, respectively,   constructed from the o-graph $G_{n}$ via this algorithm from \cite{B-P}.

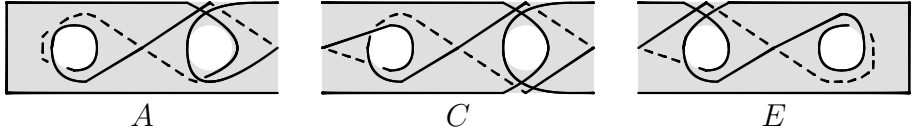
\begin{figure}[h]
\begin{center}
\special{em:linewidth 1.4pt} \unitlength=.3 mm
\begin{picture}(0,50)(0,0)
\thicklines
\put(-140,0){\begin{picture}(0,50)(0,-10)
\put(-60,0){\begin{tikzpicture}
  \begin{scope}
    \fill[gray!25]
    (0mm,0mm) rectangle (18mm,12mm) ;
      \clip (9mm,6mm) circle (3mm);
    \fill[white] (0,0) rectangle (18mm,12mm);
	  \end{scope}
\end{tikzpicture}
}
\put(0,0){\begin{tikzpicture}
  \begin{scope}
    \fill[gray!25]
    (0mm,0mm) rectangle (18mm,12mm) ;
      \clip (9mm,6mm) circle (3mm);
    \fill[white] (0,0) rectangle (18mm,12mm);
	  \end{scope}
\end{tikzpicture}
}
\qbezier(-60,0)(-60,0)(60,0)
\qbezier(-60,40)(-60,40)(20,40)
\qbezier(28,7)(60,20)(20,40)
\qbezier(-60,0)(-60,0)(-60,40)
\qbezier(-40,20)(-40,30)(-30,30)
\qbezier(-30,30)(-20,30)(-20,20)
\qbezier(-20,20)(-20,10)(-30,10)
\qbezier(-30,10)(-30,10)(-33,11)
\qbezier(-42,14)(-42,14)(-44,15)
\qbezier(-44,18)(-44,18)(-44,22)
\qbezier(-44,25)(-44,25)(-42,27)
\qbezier(30,5)(40,5)(60,20)
\qbezier(30,5)(20,5)(20,20)
\qbezier(50,40)(20,40)(20,20)
\qbezier(60,40)(60,40)(50,40)
\qbezier(24,5)(24,5)(21,6)
\qbezier(18,8)(18,8)(15,10)
\qbezier(12,12)(12,12)(9,14)
\qbezier(6,16)(6,16)(3,18)
\qbezier(30,40)(30,40)(31.5,39)
\qbezier(36,36)(36,36)(39,34)
\qbezier(42,32)(42,32)(45,30)
\qbezier(48,28)(48,28)(51,26)
\qbezier(54,24)(54,24)(57,22)
\qbezier(30,40)(30,40)(0,20)
\qbezier(0,20)(0,20)(-25,5)
\qbezier(-40,20)(-40,5)(-25,5)
\qbezier(-40,20)(-40,20)(-39,25)
\qbezier(-3,22)(-3,22)(-6,24)
\qbezier(-9,26)(-9,26)(-12,28)
\qbezier(-15,30)(-15,30)(-18,32)
\qbezier(-21,34)(-21,34)(-24,35)
\qbezier(-27,35)(-27,35)(-30,34)
\qbezier(-33,33)(-33,33)(-36,31)
\put(0,-10){\makebox(0,0)[c]{$A$}}
\end{picture}}
\put(0,0){\begin{picture}(0,50)(0,-10)
\put(-60,0){\begin{tikzpicture}
  \begin{scope}
    \fill[gray!25]
    (0mm,0mm) rectangle (18mm,12mm) ;
      \clip (9mm,6mm) circle (3mm);
    \fill[white] (0,0) rectangle (18mm,12mm);
	  \end{scope}
\end{tikzpicture}
}
\put(0,0){\begin{tikzpicture}
  \begin{scope}
    \fill[gray!25]
    (0mm,0mm) rectangle (18mm,12mm) ;
      \clip (9mm,6mm) circle (3mm);
    \fill[white] (0,0) rectangle (18mm,12mm);
	  \end{scope}
\end{tikzpicture}
}
\qbezier(-60,0)(-60,0)(20,0)
\qbezier(-60,40)(-60,40)(20,40)
\qbezier(20,0)(60,20)(20,40)
\qbezier(-60,20)(-60,20)(-30,30)
\qbezier(-30,30)(-20,30)(-20,20)
\qbezier(-20,20)(-20,10)(-30,10)
\qbezier(-30,10)(-30,10)(-33,11)
\qbezier(-42,14)(-42,14)(-45,15)
\qbezier(-48,16)(-48,16)(-51,17)
\qbezier(-54,18)(-54,18)(-57,19)
\qbezier(60,0)(60,0)(50,0)
\qbezier(50,0)(20,0)(20,20)
\qbezier(50,40)(20,40)(20,20)
\qbezier(60,40)(60,40)(50,40)
\qbezier(60,20)(60,20)(30,0)
\qbezier(30,0)(30,0)(28.5,1)
\qbezier(24,4)(24,4)(21,6)
\qbezier(18,8)(18,8)(15,10)
\qbezier(12,12)(12,12)(9,14)
\qbezier(6,16)(6,16)(3,18)
\qbezier(30,40)(30,40)(31.5,39)
\qbezier(36,36)(36,36)(39,34)
\qbezier(42,32)(42,32)(45,30)
\qbezier(48,28)(48,28)(51,26)
\qbezier(54,24)(54,24)(57,22)
\qbezier(30,40)(30,40)(0,20)
\qbezier(0,20)(0,20)(-25,5)
\qbezier(-40,20)(-40,5)(-25,5)
\qbezier(-40,20)(-40,20)(-39,25)
\qbezier(-3,22)(-3,22)(-6,24)
\qbezier(-9,26)(-9,26)(-12,28)
\qbezier(-15,30)(-15,30)(-18,32)
\qbezier(-21,34)(-21,34)(-24,35)
\qbezier(-27,35)(-27,35)(-30,34)
\qbezier(-33,32)(-33,32)(-36,30)
\put(0,-10){\makebox(0,0)[c]{$C$}}
\end{picture}}
\put(140,0){\begin{picture}(0,50)(0,-10)
\thicklines
\put(-60,0){\begin{tikzpicture}
  \begin{scope}
    \fill[gray!25]
    (0mm,0mm) rectangle (18mm,12mm) ;
      \clip (9mm,6mm) circle (3mm);
    \fill[white] (0,0) rectangle (18mm,12mm);
	  \end{scope}
\end{tikzpicture}
}
\put(0,0){\begin{tikzpicture}
  \begin{scope}
    \fill[gray!25]
    (0mm,0mm) rectangle (18mm,12mm) ;
      \clip (9mm,6mm) circle (3mm);
    \fill[white] (0,0) rectangle (18mm,12mm);
	  \end{scope}
\end{tikzpicture}
}
\qbezier(-60,40)(-60,40)(-40,40)
\qbezier(-40,40)(-20,30)(-20,20)
\qbezier(-20,20)(-20,10)(-30,10)
\qbezier(-30,10)(-30,10)(-33,11)
\qbezier(-42,14)(-42,14)(-45,15)
\qbezier(-48,16)(-48,16)(-51,17)
\qbezier(-54,18)(-54,18)(-57,19)
\qbezier(60,40)(60,40)(60,0)
\qbezier(60,0)(60,0)(-60,0)
\qbezier(33,30)(20,30)(20,20)
\qbezier(20,20)(20,10)(30,10)
\qbezier(30,10)(40,10)(40,20)
\qbezier(40,20)(40,35)(30,35)
\qbezier(44,23)(44,23)(43,26)
\qbezier(44,17)(44,17)(44,20)
\qbezier(44,14)(44,14)(44,11)
\qbezier(42,9)(42,9)(39,7)
\qbezier(36,6)(36,6)(33,5)
\qbezier(30,5)(30,5)(27,5)
\qbezier(24,5)(24,5)(21,6)
\qbezier(18,8)(18,8)(15,10)
\qbezier(12,12)(12,12)(9,14)
\qbezier(6,16)(6,16)(3,18)
\qbezier(30,35)(30,35)(0,20)
\qbezier(0,20)(0,20)(-25,5)
\qbezier(-40,20)(-40,5)(-25,5)
\qbezier(-40,20)(-40,20)(-39,25)
\qbezier(-3,22)(-3,22)(-6,24)
\qbezier(-9,26)(-9,26)(-12,28)
\qbezier(-15,30)(-15,30)(-18,32)
\qbezier(-21,34)(-21,34)(-24,36)
\qbezier(-27,38)(-27,38)(-30,40)
\qbezier(-30,40)(-30,40)(-60,20)
\qbezier(-40,20)(-40,30)(-20,40)
\qbezier(-20,40)(-20,40)(60,40)
\put(0,-10){\makebox(0,0)[c]{$E$}}
\end{picture}}
\end{picture}
\end{center} \caption{Blocks $A$, $C$, and $E$.} \label{fig101}
\end{figure}

\begin{figure}[h]
\begin{center}
\special{em:linewidth 1.4pt} \unitlength=.3mm
\begin{picture}(0,50)(0,0)
\put(-80,0){\begin{picture}(0,50)(0,-10)
\thicklines
\put(-60,0){\begin{tikzpicture}
  \begin{scope}
    \fill[gray!25]
    (0mm,0mm) rectangle (18mm,12mm) ;
      \clip (9mm,6mm) circle (3mm);
    \fill[white] (0,0) rectangle (18mm,12mm);
	  \end{scope}
\end{tikzpicture}
}
\put(0,0){\begin{tikzpicture}
  \begin{scope}
    \fill[gray!25]
    (0mm,0mm) rectangle (18mm,12mm) ;
      \clip (9mm,6mm) circle (3mm);
    \fill[white] (0,0) rectangle (18mm,12mm);
	  \end{scope}
\end{tikzpicture}
}
\qbezier(-60,0)(-60,0)(60,0)
\qbezier(-60,40)(-60,40)(20,40)
\qbezier(28,7)(60,20)(20,40)
\qbezier(-60,20)(-60,20)(-30,30)
\qbezier(-30,30)(-20,30)(-20,20)
\qbezier(-20,20)(-20,10)(-30,10)
\qbezier(-30,10)(-30,10)(-33,11)
\qbezier(-42,14)(-42,14)(-45,15)
\qbezier(-48,16)(-48,16)(-51,17)
\qbezier(-54,18)(-54,18)(-57,19)
\qbezier(30,5)(40,5)(60,20)
\qbezier(30,5)(20,5)(20,20)
\qbezier(50,40)(20,40)(20,20)
\qbezier(60,40)(60,40)(50,40)
\qbezier(24,5)(24,5)(21,6)
\qbezier(18,8)(18,8)(15,10)
\qbezier(12,12)(12,12)(9,14)
\qbezier(6,16)(6,16)(3,18)
\qbezier(30,40)(30,40)(31.5,39)
\qbezier(36,36)(36,36)(39,34)
\qbezier(42,32)(42,32)(45,30)
\qbezier(48,28)(48,28)(51,26)
\qbezier(54,24)(54,24)(57,22)
\qbezier(30,40)(30,40)(0,20)
\qbezier(0,20)(0,20)(-25,5)
\qbezier(-40,20)(-40,5)(-25,5)
\qbezier(-40,20)(-40,20)(-39,25)
\qbezier(-3,22)(-3,22)(-6,24)
\qbezier(-9,26)(-9,26)(-12,28)
\qbezier(-15,30)(-15,30)(-18,32)
\qbezier(-21,34)(-21,34)(-24,35)
\qbezier(-27,35)(-27,35)(-30,34)
\qbezier(-33,32)(-33,32)(-36,30)
\end{picture}
\put(0,-10){\makebox(0,0)[c]{$B$}}
}
\put(80,0){\begin{picture}(0,50)(0,-10)
\thicklines
\put(-60,0){\begin{tikzpicture}
  \begin{scope}
    \fill[gray!25]
    (0mm,0mm) rectangle (18mm,12mm) ;
      \clip (9mm,6mm) circle (3mm);
    \fill[white] (0,0) rectangle (18mm,12mm);
	  \end{scope}
\end{tikzpicture}
}
\put(0,0){\begin{tikzpicture}
  \begin{scope}
    \fill[gray!25]
    (0mm,0mm) rectangle (18mm,12mm) ;
      \clip (9mm,6mm) circle (3mm);
    \fill[white] (0,0) rectangle (18mm,12mm);
	  \end{scope}
\end{tikzpicture}
}
\qbezier(-60,40)(-60,40)(-40,40)
\qbezier(-40,40)(-20,30)(-20,20)
\qbezier(-20,20)(-20,10)(-30,10)
\qbezier(-30,10)(-30,10)(-33,11)
\qbezier(-42,14)(-42,14)(-45,15)
\qbezier(-48,16)(-48,16)(-51,17)
\qbezier(-54,18)(-54,18)(-57,19)
\qbezier(60,0)(60,0)(50,0)
\qbezier(50,0)(20,0)(20,20)
\qbezier(33,30)(20,30)(20,20)
\qbezier(42,29)(42,29)(44,28)
\qbezier(46,27)(46,27)(48,26)
\qbezier(50,25)(50,25)(52,24)
\qbezier(54,23)(54,23)(56,22)
\qbezier(60,20)(60,20)(30,0)
\qbezier(30,0)(30,0)(28.5,1)
\qbezier(24,4)(24,4)(21,6)
\qbezier(18,8)(18,8)(15,10)
\qbezier(12,12)(12,12)(9,14)
\qbezier(6,16)(6,16)(3,18)
\qbezier(30,35)(30,35)(0,20)
\qbezier(30,35)(55,20)(25,0)
\qbezier(-60,0)(-60,0)(25,0)
\qbezier(0,20)(0,20)(-25,5)
\qbezier(-40,20)(-40,5)(-25,5)
\qbezier(-40,20)(-40,20)(-39,25)
\qbezier(-3,22)(-3,22)(-6,24)
\qbezier(-9,26)(-9,26)(-12,28)
\qbezier(-15,30)(-15,30)(-18,32)
\qbezier(-21,34)(-21,34)(-24,36)
\qbezier(-27,38)(-27,38)(-30,40)
\qbezier(-30,40)(-30,40)(-60,20)
\qbezier(-40,20)(-40,30)(-20,40)
\qbezier(-20,40)(-20,40)(60,40)
\put(0,-10){\makebox(0,0)[c]{$D$}}
\end{picture}}
\end{picture}
\end{center} \caption{Blocks $B$ and $D$.} \label{fig102}
\end{figure}
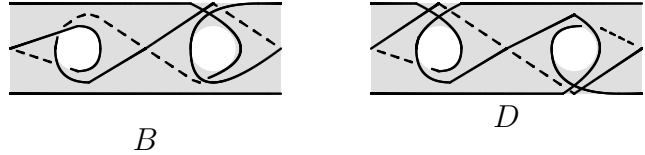

\begin{corollary} \cite{Stekl2015}
For every n = 5+4s, where s is a nonnegative integer, the complexity of the manifold $N_{n}$ is equal to $n$.
\end{corollary}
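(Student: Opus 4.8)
The plan is to verify that the manifold $N_n$ and its special spine $P_n$ satisfy the hypotheses of Theorem~\ref{theorem9}, and then to invoke that theorem. By the algorithm of~\cite{B-P}, the singular graph of the special polyhedron produced from an o-graph is precisely its underlying $4$-valent graph, so $P_n$ has exactly one true vertex for each vertex of $G_n$. Since $G_n$ has $n=5+4s$ vertices, the spine $P_n$ has $n$ true vertices, and $n\geq 5\geq 2$; in particular the numerical hypothesis of Theorem~\ref{theorem9} is met.

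Next I would determine the $2$-components of $P_n$. In the o-graph calculus of~\cite{B-P} each $2$-component of the resulting polyhedron is read off as a closed curve obtained by running along the edges of $G_n$, with the behaviour at the vertices governed by the crossing data and along the edges by the $\mathbb Z_3$-labels. Propagating these curves through the standard blocks $A,B,C,D,E$ of Figures~\ref{fig101}--\ref{fig102}, assembled as $G_n=A\cdot B^s\cdot C\cdot D^s\cdot E$ with the decorations recorded in Figures~\ref{figG5}--\ref{figG9}, I would check that they close up into exactly two curves, so that $P_n$ has two $2$-components. During the same bookkeeping I would also note that each of these two boundary curves passes through more than three of the true vertices of $P_n$ (in fact through a number of them growing with $n$), so neither boundary curve is short.

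Finally, by~\cite{Stekl2015} each $N_n$ is a hyperbolic $3$-manifold with totally geodesic boundary. Combining this with the two preceding paragraphs, the pair $(N_n,P_n)$ satisfies every hypothesis of Theorem~\ref{theorem9}, and that theorem yields $c(N_n)=n$, as asserted. I expect the main obstacle to be the middle step: the explicit, block-by-block tracing of the boundary curves through the pictures of $A,B,C,D,E$, confirming at once that they organize into exactly two closed curves and that each of them visits more than three true vertices. This is a finite but somewhat delicate verification; alternatively, both this combinatorial structure and the hyperbolicity may be imported directly from the analysis of $N_n$ and $P_n$ carried out in~\cite{Stekl2015}.
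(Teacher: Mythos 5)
Your proposal is correct and follows exactly the paper's argument: count the true vertices of $P_n$ via the o-graph algorithm, trace the curves through the blocks to see there are two $2$-components with non-short boundary curves, import hyperbolicity from \cite{Stekl2015}, and apply Theorem~\ref{theorem9}. The paper likewise leaves the block-by-block curve tracing as a routine verification.
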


\begin{proof}
 According to the algorithm above, the number of true vertices of the spine $P_{n}$ is equal to the number of vertices of the o-graph $G_{n}$, i.e., to $n$. The curves shown in the pictures of the blocks are joined into closed curves. It can be easily verified that for any $n$, the number of closed curves is equal to two. Since these closed curves correspond to the $2$-components of the spine $P_{n}$, we obtain that $P_{n}$ has two  $2$-components. It follows from the construction of the spine that the boundary curves of both $2$-components of $P_{n}$ are not short. Moreover, it is proved in \cite{Stekl2015} that the manifolds $N_{n}$ are hyperbolic manifolds with totally geodesic boundary. Therefore, the manifolds $N_{n}$ and their special spines $P_{n}$ satisfy the conditions of Theorem~\ref{theorem9}. This proves the corollary.
\end{proof}

\end{document}